\newtheorem{theorem}{Theorem}
\newtheorem{lemma}{Lemma}
\newtheorem{proposition}{Proposition}
\newtheorem{corollary}{Corollary}
\newtheorem{remark}{Remark}
\newtheorem{example}{Example}
\def\N{{\mathbb N}}    
\def\R{{\mathbb R}}    
\def\C{{\mathbb C}}  
\def\sti{0.15in}
\def\y{\mathbf{y}}
\def\M{\mathbf{M}}
\def\E{\mathbf{E}}
\def\bS{\mathbb{S}}
\def\U{\mathbf{U}}
\def\u{\mathbf{u}}
\def\z{\mathbf{z}}
\def\v{\mathbf{v}}
\def\V{\mathbf{V}}
\def\saK{\mathbf{K}}
\def\A{\mathbf{A}}
\def\B{\mathbf{B}}
\def\x{\mathbf{x}}
\def\t{\mathbf{t}}
\def\d{\mathrm{d}}
\newcommand{\Vector}[1]{\ensuremath{\boldsymbol{#1}}}
\newcommand{\gammab}{\Vector{\gamma}}
\newcommand{\phib}{\Vector{\phi}}
\newcommand{\todo}[1]{\begin{color}{red}#1\end{color}}
\newcommand{\victor}[1]{\textcolor{black}{#1}}
\newcommand{\jean}[1]{\textcolor{black}{#1}}
\newcommand{\oz}[1]{\textcolor{black}{#1}}
\begin{document}

\title{ \LARGE \bf%
\jean{Minimizing rational functions: A hierarchy of approximations  via pushforward measures}}

\author{Jean Bernard Lasserre$^{1}$, Victor Magron$^{1}$, Swann Marx$^{2}$ {and Olivier Zahm$^{3}$}}

\footnotetext[1]{CNRS; LAAS; 7 avenue du colonel Roche, F-31400 Toulouse; France}
\footnotetext[2]{CNRS; LS2N $\&$ \'Ecole Centrale de Nantes; 1 rue de la No\"e F-44321, Nantes; France}
\footnotetext[3]{Univ. Grenoble Alpes, Inria, CNRS, Grenoble INP, LJK, 38000 Grenoble, France}
\maketitle
\abstract{This paper is concerned with minimizing a sum of rational functions over a compact set of \oz{high-dimension}. Our approach relies on the second Lasserre's hierarchy (also known as the \emph{upper bounds} hierarchy) \oz{formulated on the \emph{pushforward} measure in order to work in a space of smaller dimension. We show that in the general case the minimum can be approximated as closely as desired from above with a hierarchy of semidefinite programs (SDPs) problems or, in the particular case of a single fraction, with a hierarchy of generalized eigenvalue problems.}
\jean{We numerically illustrate the 
potential of using the pushforward measure rather than
the standard upper bounds hierarchy. In our opinion, this potential should be  a 
strong incentive to investigate a related 
challenging problem interesting in its own; namely integrating an arbitrary power of a given polynomial on a simple set (e.g., unit box or unit sphere)
with respect to Lebesgue or Haar measure.}


\section{Introduction}


\paragraph{Problem statement.} We consider the following optimization problem
\begin{equation}
\label{quotients}
\rho = \min_{\mathbf x\in \mathbf K} \sum_{i=1}^N \frac{f_i(\mathbf{x})}{g_i(\mathbf{x})} ,
\end{equation}
where $\saK\subset\R^n$ is a compact set,  all numerators and denominators are polynomials, and all denominators are positive on $\saK$. 
{An important and motivating application} is the minimization of a \emph{generalized Rayleigh quotient} where $f_i(\x) = \x^\top A_i \x$ and $g_i(\x) = \x^\top B_i \x$ are quadratic functions defined from symmetric $n \times n$ matrices  $A_i,B_i$ (each $B_i$ having only positive eigenvalues) and with $\mathbf K=[-1,1]^n$ being the unit cube. Generalized Rayleigh quotients appear in many problems, such as  the multi-user MIMO system \cite{Primolevo2006}, sparse Fisher discriminant analysis in pattern recognition \cite{Wu2009} and in nonlinear dimension reduction methods for reduced order modeling as in \cite{Zahm2020}. 

\jean{Approximating as closely as desired the global minimum $\rho$ in Problem \eqref{quotients} is challenging because it is nonlinear,  nonconvex and, in addition, the dimension $n$ of $\x$  may be high. We discuss a methodology 
which combines i) a ``pushforward" technique \cite{Las19} to obtain a equivalent problem of reduced dimension and ii) the SOS-hierarchy of upper bounds introduced in \cite{lasserre2011new}. While 
this reduction to a hierarchy of small dimensional generalized eigenvalue problems reveals some potential, 
it also raises an interesting scientific challenge in its own: how to compute efficiently moments of the form
\begin{equation}
    \label{eq:integral}
    \int_{\saK} f^d\,d\lambda\,,\quad d\in\N\,,
\end{equation}
where 
$f$ is a polynomial, $\saK$ is a simple set (e.g., a box, ellipsoid, hypercube, or their image by an affine mapping) and $\lambda$ is any measure whose support is $\saK$ and  whose moments can be explicitly computed using cubature formula for integration. 
In a typical example, $\saK$ is the unit box $[-1,1]^n$ and $\lambda$ is the Lebesgue measure. One goal of this paper is to bring attention of the optimization community to problem \eqref{eq:integral} and convince that an efficient algorithm for \eqref{eq:integral} (even in some restricted setting for $f$ and $\saK$) would be very interesting for global optimization.}



\paragraph{Lower bounds hierarchy.} \jean{ Initially, \emph{the moment-sum-of-squares}
(Moment-SOS) hierarchy ~\cite{Lasserre01moments}, also known as the \emph{lower bounds} hierarchy, was designed to handle polynomial optimization problems, i.e. problems whose objective function and constraints are described with polynomials. Each step of the hierarchy
is a semidefinite relaxation of the original problem which can be solved efficiently\footnote{An SDP is a linear conic program on the convex cone of real symmetric matrices with nonnegative eigenvalues. With prescribed accuracy it can be solved via interior-point methods in time polynomial in the input size. For more details about applications of SDP together with complexity estimates of the interior-point algorithms, we refer to~\cite{deKlerkSDP,NN-94,Vandenberghe94SDP}}. 
Its dual has a simple interpretation in terms of SOS-based positivity certificates.
By increasing the degree of the SOS in the certificate (and therefore the size of the resulting SDP-relaxation) one obtains a monotone sequence of lower bounds which converges to the global minimum.}

\jean{The Moment-SOS hierarchy can be applied to solve the
\emph{Generalized Moment Problem} (GMP) with semi-algebraic data\footnote{A GMP is a linear conic optimisation problem on convex cones of finite Borel measures.
In a GMP with algebraic data, all functions and supports of measures that appear in its description are semi-algebraic}. In fact, Polynomial  optimization is only one of the numerous applications of the GMP and for more details
the interested reader is referred to \cite{lasserre2010moments,henrion2020moment}.
For problem \eqref{quotients} with a single fraction, the SOS-hierarchy was first proposed in \cite{jibetean2006global}.
Of course, via a common denominator, the case of several fractions reduces to a single fraction. However the resulting high degree of some of the involved polynomials is a serious  obstacle to implement the standard SOS-hierarchy as proposed in \cite{jibetean2006global} and a specific approach, e.g. as in \cite{bugarin2016minimizing}, is needed.}
\jean{But in view of its high computational burden, the standard lower bounds hierarchy is restricted to problems of modest dimension.
Indeed, for problems }involving polynomials with $n$ variables of maximal degree $d$, the size of the resulting SDP relaxations grows rapidly as it is proportional to $\binom{n+d}{n}$. To overcome these limitations one may take advantage of {some structure of large-scale problems like symmetries and/or sparsity}. For instance, sparsity has been considered in~\cite{Las06SparseSOS,Waki06SparseSOS,chordaltssos,tssos}. To the best of our knowledge, such structures {have not been considered to solve \eqref{quotients} with the exception of \cite{bugarin2016minimizing}.}
{We here follow an approach different from sparsity exploiting schemes: we use a ``pushforward measure" technique to work in a space of smaller dimension.
This technique initiated in \cite{Las19} relies on
a second Lasserre's hierarchy which now provides a monotone sequence of \emph{upper bounds} which converges to  the minimum.}.


\paragraph{Upper bounds hierarchy.}
\jean{A second SOS-based hierarchy proposed in \cite{lasserre2011new} yields a monotone sequence of \emph{upper bounds} which converges to the minimum and therefore can be seen as complementary to the first SOS hierarchy of lower bounds.
In addition, and in contrast to the hierarchy of lower bounds, the function to be minimized may not  be either a polynomial or a semialgebraic function.}
At each step of the hierarchy, an upper bound on the minimum of a given polynomial is computed by solving a so-called {\em generalized eigenvalue} problem. In our context, the \jean{two involved} matrices encode certain information regarding the moments of {some reference probability measure $\mu$ whose support is exactly \jean{the set of feasible solutions} $\saK$.}
For instance, in the bivariate case, the entries of these matrices at the second step of the hierarchy necessarily depend on the value of the integrals $\int_\saK x_1 d \mu$, $\int_\saK x_2 d \mu$, $\int_\saK x_1^2 d \mu$, $\int_\saK x_1 x_2 d \mu$ and $\int_\saK x_2^2 d \mu$.
In several {important cases these values are available analytically.}
This includes the case where $\mu$ is the uniform
measure on, for instance, a hypercube $\saK=[-1, 1]^n$, a simplex, {or their image by an affine mapping. For more details about these closed formula, the interested reader is referred to \cite{Grundmann78,Parsimony16,deKlerk16} and also to
\cite{magron2018interval} and \cite{de2019distributionally} for applications in computer arithmetic and robust optimization, respectively.}

Several efforts have been made to provide convergence rates  for the  hierarchy of upper bounds.
In~\cite{deKlerk16}, the authors obtain convergence rates which are no worse than $O(1/\sqrt{d})$ and often match practical experiments. On some specific sets $\mathbf K$ this convergence rate has been improved. \jean{For instance, 
for the box $\saK=~[-1,1]^n$ and the sphere $\saK=\mathbb{S}^{n-1}$, an $O(1/d^2)$ rate of convergence  has been obtained in \cite{de2017improved} and \cite{de2020convergence} respectively.
For some other cases (convex bodies in particular)
an $O(\log^2d/d^2)$ rate of convergence rates has been recently obtained in \cite{slot2020improved} and in \cite{laurent2020near}.}
All these research efforts show that the asymptotic behavior of the upper bounds hierarchy is \jean{better understood than for} the lower bounds hierarchy. 

\jean{As for the lower bounds hierarchy, the size of the resulting matrices is
critical and restricts its application to small size problems. In fact so far its main interest has been its theoretical rate of convergence to the global minimum as such guarantees are rather rare.
A first attempt to break the curse of dimensionality in the upper bounds hierarchy for optimization has been done by the first author  in \cite{Las19}. The idea is to use the \emph{pushforward} measure of the Lebesgue measure by
the polynomial to minimize. In doing so one reduces the initial problem to a related \emph{univariate} problem and as a result one obtains a hierarchy
of upper bounds (again generalized eigenvalue problems) which involves univariate SOS polynomials {of increasing degree}. Again and remarkably, in \cite{laurent2020near} the authors
have shown a $O(\log^2d/d^2)$ rate of convergence to the global minimum, which
makes this ``univariate" hierarchy appealing as the computational burden of the resulting eigenvalue problems are orders of magnitude smaller than the 
initial (multivariate) hierarchy of upper bounds. Indeed at step $d$
one handles eigenvalue problems with matrices of size $d+1$ 
instead of ${n+d\choose d}$.}

\oz{
The price to pay for this highly desirable dimensionality break is that one needs to compute integrals of the form \eqref{eq:integral} in order to assemble the underlying moment matrices.
In a sense, the computational burden has switched from solving a large eigenvalue problem to computing high-dimensional integrals.
When $\saK$ is sufficiently simple (\emph{e.g.} a hypercube, a sphere, or their image by an affine map) these integrals can be carried out analytically by expanding the terms in the monomial basis.
Such an expansion is, however, tedious and very costly even for moderate size $n$ and reasonable power $d$. 
}


We also mention several applications of pushforward measures in the context of lower bounds hierarchies, allowing one to approximate the volume of a semialgebraic set \cite{lasserre2019volume}, the image of a semialgebraic set by a polynomials map \cite{magron2015semidefinite} as well as 
its generalization to reachable sets of discrete-time polynomial systems  \cite{magron2019semidefinite}.
\paragraph{Contribution.}
\oz{We propose an \emph{upper bounds} hierarchy for solving \eqref{quotients} where, similarily to \cite{Las19}, we break the dimension by using pushforward measures.
Contrarily to \cite{Las19}, the dimension now reduces from $n$ to $2N$, where $N$ is the number of fractions in \eqref{quotients}. This approach is thus relevant when $N\ll n$.
Our method requires the computation of integrals of the form
\begin{equation}\label{eq:integral2}
\oz{\int_{\mathbf{K}} \prod_{i=1}^N f_i(\mathbf{x})^{\alpha_i} g_i(\mathbf{x})^{\beta_i} \d\lambda (\mathbf{x}),\quad\alpha,\beta\in\N^N\,,}
\end{equation}
instead of \eqref{eq:integral}.
Here $\lambda$ is any measure whose support is $\saK$ and with all moments available or easy to compute, for instance the Lebesgue measure on $\saK=[-1,1]^n$. Again, computing such integrals is not an easy task, except {for specific sets $\mathbf{K}$}.
Indeed, to the best of our knowledge, there is no efficient method to compute such information for general semialgebraic sets. But when $\mathbf{K}$ is ``simple'' enough (e.g., the unit sphere $\mathbb S^{n-1}$ or the box $[-1,1]^n$), such integrals can be computed exactly and are even trivial for small dimension and small degree.
}

\jean{We again emphasize that computing integrals \eqref{eq:integral2} is challenging 
even for problem \eqref{quotients} with a single fraction $N=1$. Indeed for instance a brute force expansion of the integrand in the monomial basis 
is rapidly tedious and out of reach. This issue is not addressed 
in the present paper but we provide some research directions for 
further investigation.}

Several results are provided in this paper: In particular we prove that the minimum of a single rational function $N=1$ can be approximated from above and as closely as desired, by solving a hierarchy of semidefinite programs involving bivariate SOS polynomials.
When $N>1$  we do not obtain a sequence of certified upper bounds any more,
but the resulting hierarchy of semidefinite programs still converges to the minimum. In preliminary numerical experiments
one obtains better approximations than with the classical upper bounds hierarchy, and in a significantly more efficient way when $N=1$.
\jean{We do not claim that this method is competitive with efficient local optimization frameworks. Indeed so far
it is limited to problems of modest size, i.e., with a small number of quotients, due to the difficulty of computing integrals 
\eqref{eq:integral2} with large degree and/or number of variables. We rather suggest that
that one can approximate the minimum in \eqref{quotients} more efficiently with the pushforward measure
than with the standard (multivariate) upper bounds hierarchy. In view of the nice convergence properties of the (univariate) upper-bound hierarchy proved in \cite{laurent2020near}, efficient methods for 
computing  \eqref{eq:integral2} might have an impact for solving \eqref{quotients} efficiently.  Therefore we hope to convince the reader that further investigation of \eqref{eq:integral2} even for special classes of polynomials and sets $\saK$ should  deserve more attention. For instance, the work in \cite{baldoni} is already useful when $\saK$ is a simplex.}


\paragraph{Organization.} The paper is organized as follows. Section \ref{sec:notation} introduces some notation together with useful technical results. The case of a single rational function is treated in Section \ref{sec:singlerat}. Section \ref{sec:multirat} is devoted to the case of a sum of rational functions. Our contributions are illustrated by some numerical experiments in Section \ref{sec:benchs}. Finally, Section \ref{sec:conclusion} collects some concluding remarks and introduces further research lines to be followed.
\section{Notation and useful results}
\label{sec:notation}
\paragraph{Borel measures, moment and localizing matrices.}
{Following \cite[Chapter 3]{lasserre2010moments}}
we consider polynomial and rational functions of the variable $\x=(x_1,\dots,x_n)$.
Let $\R[\x]$ be the vector space of polynomials with real coefficients, and given $d\in\N$, we note $\R[\x]_{d}$ its restriction to polynomials of \oz{total} degree at most $d$. 
\oz{We denote by $\Sigma[\x]=\left\{\sum_{i=1}^m h_i(\x)^2 : m\in\mathbb{N}, h_i\in\R[\x]\right\}$ the set of sum of squares (SOS) polynomials, and by $\Sigma[\x]_d$ its restriction to polynomials of degree at most $2 d$.}
Given a compact set $\saK \subset \R^n$, let us denote by $\mathcal{M}(\saK)$ the set of Borel (i.e., positive) measures supported on $\saK$.

Given a subset $\A \subseteq \saK$ and two Borel measures $\mu,\nu \in \mathcal{M}(\saK)$, a measurable function $f:\A\rightarrow \mathbb{R}_+$ such that $\nu(\A)=\int_{\A} f\,d\mu$ is called a \emph{density} of $\nu$ with respect to $\mu$.
In the particular case where $f \in \Sigma[\x]$, we refer to $f$ as an \emph{SOS density}.
Well-known examples of Borel measures include the Dirac measure, the uniform (also called Lebesgue) measures etc.
Given $\x \in \saK$, the Dirac measure $\delta_{\x}$ concentrated on $\x$ is defined by $\delta_\x(\A) :=  \textbf{1}_{\A}(\x)$, for all $\A \subseteq \saK$, with $\textbf{1}_{\A}$ being the indicator function on $\A$.
We define the restriction of the Lebesgue measure on such an $\A$ by $\d \lambda_{\A}(\x) := \textbf{1}_{\A} \d \x$, i.e., $\lambda_{\A}$ has density $\textbf{1}_{\A}$.
Given a measure $\mu \in \mathcal{M}(\saK)$, let $\mathbf{y}=(y)_{\alpha\in\mathbb{N}^n}$ be a real sequence whose entries are the moments of $\mu$, called its \emph{moment sequence}, i.e., $y_{\alpha} = \int_{\saK} \x^\alpha \d \mu(\x)$, for all $\alpha \in \N^n$.

For a given sequence $\y \in\mathbb{R}^{\mathbb{N}^n}$ we introduce the Riesz linear functional 
\begin{equation}
\begin{split}
L_{\mathbf{y}}:\: \mathbb{R}[\mathbf{x}] &\rightarrow \mathbb{R}\\
f\: \left(= \sum_{\alpha\in\mathbb{N}^n} f_\alpha \mathbf{x}^\alpha\right)&\mapsto L_{\mathbf{y}}(f) = \sum_{\alpha\in\mathbb{N}^n} f_\alpha y_\alpha.
\end{split}
\end{equation}

{With $d\in\N$, the truncated \emph{moment matrix} $\M_d(\y)$ associated with $\y$
is the real symmetric matrix with rows and columns indexed in the canonical basis ($\mathbf{x}^\alpha$) and with entries:} $$\mathbf{M}_d(\mathbf{y})(\alpha,\beta):= L_{\mathbf{y}}(\mathbf{x}^{\alpha + \beta})=y_{\alpha + \beta}\,,\quad \alpha,\beta\in\mathbb{N}_d^n,$$
{where $\mathbb{N}^n_d:=\lbrace \alpha\in\mathbb{N}^n\mid \alpha_i\leq d,\: i=1,\ldots,n\rbrace$. This matrix is the  multivariate version of a Hankel matrix.} 
\oz{Indeed with $n=1$ and $d=2$, the moment matrix is exactly a Hankel matrix:}
\begin{equation}
\mathbf{M}_2(\mathbf{y})  = \begin{bmatrix}
y_0 & y_1 & y_2\\
y_1 & y_2 & y_3\\
y_2 & y_3 & y_4
\end{bmatrix}.
\end{equation}
In the univariate case, if $\M_d(\y)\succeq0$ for all $d$, then
$\y$ has a representing measure on $\R$, i.e.,
$y_\alpha = \int_{\mathbb{R}} x^\alpha d\mu(x)$, for all $\alpha\in\mathbb{N}$.}

{Next, with $f\in\R[\x]$ in the form:}
\begin{equation}
\mathbf{x}\mapsto f(\mathbf{x})=\sum_{\gamma\in\mathbb{N}^n} f_\gamma \mathbf{x}^{\oz{\gamma}}\,,
\end{equation}
{the  \emph{localizing matrix} associated with $\mathbf{y}$ and $f$  is
the real symmetric matrix $\M_d(f\,\y)$ with rows and columns indexed in the canonical basis ($\mathbf{x}^\alpha$), and with entries:}
$$\mathbf{M}_d(f \mathbf{y})\oz{(\alpha,\beta)} = L_{\mathbf{y}}(f(\mathbf{x})\,\x^{\alpha + \beta}) \,=\,  \sum_{\gamma\in\mathbb{N}^n} f_\gamma \,y_{\mathbf{\gamma} + \mathbf{\alpha} + \mathbf{\beta}}\,, \quad\mathbf{\alpha},\: \mathbf{\beta}\in\mathbb{N}^n_d.$$ For example, given $n=1$, $d=2$ and the polynomial $f(\mathbf{x}) = a-x^2$:
\begin{equation}
\mathbf{M}_{2}(f\,\mathbf{y}) = \begin{bmatrix}
ay_0 - y_2 & ay_1 - y_3 & ay_2 - y_4\\
ay_1 - y_3 & ay_2 - y_4 & ay_3 - y_5\\
ay_2 - y_4 & ay_3 - y_5 & ay_4 - y_6
\end{bmatrix}\,.
\end{equation}

{Let us recall} a useful preliminary result stated in \cite[Theorem 3.2]{lasserre2011new}.
\begin{theorem}
\label{th:loc}
Let $\saK \subseteq [-1,1]^n$ be compact and let $\mu$ be an arbitrary, fixed, finite Borel measure supported on $\saK$ and with vector of moments $(\y_{\alpha})$, $\alpha \in \N^n$. Let $h$ be a continuous function on $\R^n$. 
Then $h$ is nonnegative on $\saK$ if and only if $\M_d(h \, \y) \succeq 0$, for all $d \in \N$.
\end{theorem}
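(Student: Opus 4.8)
The plan is to establish the two implications separately: the ``only if'' part is a direct computation, whereas the ``if'' part is obtained by contraposition through a density argument. A preliminary remark is that the matrices involved are well defined even when $h$ is merely continuous, interpreting $\M_d(h\,\y)(\alpha,\beta):=\int_\saK h(\x)\,\x^{\alpha+\beta}\,d\mu(\x)$: since $\saK\subseteq[-1,1]^n$ is compact, $h$ is bounded on $\saK$ and $|\x^\gamma|\le 1$ there, so together with $\mu(\saK)<\infty$ each such entry is a finite real number. Next, for a polynomial $p=\sum_\alpha p_\alpha\x^\alpha$ with coefficient vector $\mathbf p=(p_\alpha)$ of the appropriate size, expanding $p^2$ yields the identity
\[
\mathbf p^\top\M_d(h\,\y)\,\mathbf p=\sum_{\alpha,\beta}p_\alpha p_\beta\int_\saK h\,\x^{\alpha+\beta}\,d\mu=\int_\saK h(\x)\,p(\x)^2\,d\mu(\x).
\]
If $h\ge 0$ on $\saK$, the right-hand side is nonnegative for every such $p$ and every $d$, hence $\M_d(h\,\y)\succeq 0$ for all $d$.

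For the converse I would argue by contraposition: assume $h(\x_0)<0$ for some $\x_0\in\saK$ and exhibit a $d$ with $\M_d(h\,\y)\not\succeq 0$. By continuity of $h$ there are $r>0$ and $\delta>0$ such that $h(\x)\le-\delta$ for all $\x\in\saK$ with $\|\x-\x_0\|\le r$. Choose a continuous bump function $\chi:\R^n\to[0,1]$ with $\chi(\x_0)=1$ and $\chi\equiv 0$ outside $\{\,\|\x-\x_0\|\le r\,\}$, for instance $\chi(\x)=\max\{0,\,1-\|\x-\x_0\|/r\}$. Since $\x_0\in\saK=\mathrm{supp}\,\mu$, the set $\{\chi>0\}$ is a neighbourhood of $\x_0$ and thus has positive $\mu$-measure, so $c:=\int_\saK\chi^2\,d\mu>0$; moreover $\chi^2$ is supported where $h\le-\delta$, whence $\int_\saK h\,\chi^2\,d\mu\le-\delta c<0$. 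By the Stone--Weierstrass theorem there is a sequence of polynomials $p_k$ converging to $\chi$ uniformly on $\saK$; since $h$ is bounded on $\saK$ and $\mu$ is finite, $\int_\saK h\,p_k^2\,d\mu\to\int_\saK h\,\chi^2\,d\mu<0$, so $\int_\saK h\,p_k^2\,d\mu<0$ for some $k$. Applying the identity above with $d$ at least the total degree of $p_k$ gives $\mathbf p_k^\top\M_d(h\,\y)\,\mathbf p_k<0$, contradicting $\M_d(h\,\y)\succeq 0$.

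The genuinely delicate ingredient is the sufficiency direction, and within it the use of the full-support hypothesis $\mathrm{supp}\,\mu=\saK$: this is exactly what forces the test function $\chi$ to have strictly positive $L^2(\mu)$-norm, and the statement fails without it (a measure concentrated away from the sign change of $h$ makes every $\M_d(h\,\y)$ positive semidefinite while $h\not\ge 0$ on $\saK$). The remaining points --- finiteness of the moment-matrix entries from compactness of $\saK\subseteq[-1,1]^n$, the quadratic-form identity, and the uniform polynomial approximation --- are routine.
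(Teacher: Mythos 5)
The paper does not prove this statement: it is recalled verbatim from \cite[Theorem 3.2]{lasserre2011new}, so there is no in-paper argument to match your proposal against. Your proof is correct and complete as written. It is worth comparing it with the proof in the cited source, which handles necessity by the same quadratic-form identity $\mathbf p^\top\M_d(h\,\y)\mathbf p=\int_\saK h\,p^2\,d\mu$, but establishes sufficiency quite differently: there one observes that the sequence $z_\alpha:=\int_\saK h\,\x^\alpha\,d\mu$ is uniformly bounded because $\saK\subseteq[-1,1]^n$, so that $\M_d(z)\succeq0$ for all $d$ forces $z$ to be the moment sequence of some nonnegative Borel measure $\nu$ on $[-1,1]^n$; determinacy of the moment problem on compact sets then gives $d\nu=h\,d\mu$, hence $h\ge0$ $\mu$-almost everywhere, and continuity together with $\mathrm{supp}\,\mu=\saK$ upgrades this to $h\ge0$ on all of $\saK$. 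Your contrapositive argument via a bump function localized where $h\le-\delta$, followed by Stone--Weierstrass approximation and passage to the limit in $\int_\saK h\,p_k^2\,d\mu$, reaches the same conclusion while bypassing the moment-problem machinery entirely; as a by-product it never uses the normalization $\saK\subseteq[-1,1]^n$ except for compactness, so it is both more elementary and marginally more general. You also correctly identify the one hypothesis that is genuinely load-bearing in the sufficiency direction, namely that the support of $\mu$ is all of $\saK$ (read ``supported on $\saK$'' in the statement as $\mathrm{supp}\,\mu=\saK$, consistently with the surrounding text of the paper), and your counterexample for a measure concentrated away from the sign change of $h$ is valid.
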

{Now, fix an arbitrary Borel measure $\mu$ whose support is $\saK$, and with vector of moments $(\y_{\alpha})_{\alpha \in \N^n}$.
Invoking Theorem \ref{th:loc}, in \cite{lasserre2011new} the first author provides a monotone sequence of upper bounds converging 
to the minimum of a polynomial $f$ over a compact set $\saK$, 
by solving the hierarchy of SDPs indexed by $d \in \N$:}
\begin{align}
\label{eq:pol_mom}
\begin{array}{rl}
a_{d} = \displaystyle\sup_{a \in \R}& \,a: \\
\mbox{s.t.}& \M_{d}(f\, \y) \, \succeq \, a \, \M_d( \y) \,,
\end{array}
\end{align}
whose dual is given by
\begin{align}
\label{eq:pol_sos}
\begin{array}{rl}
\displaystyle\inf_{\sigma \in \Sigma[\x]_d}&  \, \displaystyle\int_{\saK} f(\mathbf{x})\,\sigma(\mathbf{x}) \,\d \mu(\mathbf{x}): \\
\mbox{s.t.}&  \displaystyle\int_{\saK}  \sigma(\mathbf{x})\, \d \mu \, = \, 1 \,, \\
\end{array}
\end{align}
%
\begin{theorem}{(\cite{lasserre2011new})}
\label{th:cvg_pol}
Let $\saK \subseteq \R^n$ be a compact set, $\mu \in \mathcal{M}(\saK)$ with moment sequence $\y$ and $f \in \R[\x]$.
Consider the hierarchy of semidefinite programs \eqref{eq:pol_mom} indexed by $d\in\N$.
Then:
\begin{itemize}
\item[(i)] SDP \eqref{eq:pol_mom} has an optimal solution $a_d \geq f^\star$ for every $d\in\N$.
\item[(ii)] There is no duality gap between SDP \eqref{eq:pol_sos} and SDP \eqref{eq:pol_mom}, and SDP \eqref{eq:pol_sos} has an optimal solution $\sigma^\star \in \Sigma[\x]_{d}$ which satisfies $\int_{\saK} (f(\x) - a_d) \sigma^\star(\x) \d \mu(\x) = 0$.
\item[(iii)] The sequence $(a_d)_{d \in \N}$ is monotone nonincreasing and $a_d \downarrow f^\star$ as $d\to \infty$.
\end{itemize}
\end{theorem}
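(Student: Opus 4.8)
The plan is to prove the three items in order, relying on Theorem \ref{th:loc} (the localizing-matrix characterization of nonnegativity) as the main tool, together with a density/Weierstrass argument for the convergence in item (iii).

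For item (i), the key observation is that since $\mu$ is supported on $\saK$ and $f(\x)-f^\star \geq 0$ on $\saK$, Theorem \ref{th:loc} gives $\M_d((f-f^\star)\,\y)\succeq 0$ for all $d\in\N$; equivalently $\M_d(f\,\y)\succeq f^\star\,\M_d(\y)$, so $a=f^\star$ is feasible for SDP \eqref{eq:pol_mom} and hence $a_d\geq f^\star$. To see that the supremum is attained, I would note that the feasible set $\{a\in\R : \M_d(f\,\y)\succeq a\,\M_d(\y)\}$ is closed, and it is bounded above because $\M_d(\y)\succeq 0$ is nonzero (taking a vector $\v$ with $\v^\top\M_d(\y)\v>0$, which exists since $\mu$ has mass on $\saK$, forces $a\leq \v^\top\M_d(f\,\y)\v/\v^\top\M_d(\y)\v$); a nonempty closed bounded subset of $\R$ has a maximum.

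For item (ii), I would set up the primal SDP \eqref{eq:pol_mom} and dual SDP \eqref{eq:pol_sos} in standard conic form and invoke a strong-duality theorem for SDP: it suffices to exhibit a strictly feasible point of one of the programs (Slater's condition). On the dual side, the constant polynomial $\sigma \equiv 1/\mu(\saK)$ lies in the interior of $\Sigma[\x]_d$ (it is a strictly positive SOS) and satisfies $\int_\saK \sigma\,d\mu = 1$, giving Slater for the dual; hence there is no duality gap and the primal optimum $a_d$ is attained (consistent with item (i)), while the dual infimum is attained at some $\sigma^\star\in\Sigma[\x]_d$ by a compactness argument on the feasible set intersected with a sublevel set of the objective. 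Complementary slackness for the pair then reads $\langle \M_d((f-a_d)\,\y),\, X^\star\rangle = 0$ where $X^\star$ is the Gram matrix of $\sigma^\star$; unwinding the definition of the localizing matrix, this is exactly $\int_\saK (f(\x)-a_d)\,\sigma^\star(\x)\,d\mu(\x)=0$.

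For item (iii), monotonicity is immediate: a feasible $\sigma$ for the dual at level $d$ is also feasible at level $d+1$ (since $\Sigma[\x]_d\subseteq\Sigma[\x]_{d+1}$), so the dual infima are nonincreasing, i.e. $a_{d+1}\leq a_d$; combined with $a_d\geq f^\star$ the sequence converges to some $a^\star\geq f^\star$. For the reverse inequality $a^\star\leq f^\star$, I would argue by the usual concentration trick: fix $\varepsilon>0$, pick $\x^\star\in\saK$ with $f(\x^\star)\leq f^\star+\varepsilon$ and a small ball $\B$ around $\x^\star$ on which $f\leq f^\star+2\varepsilon$ and with $\mu(\B)>0$ (possible since $\x^\star\in\operatorname{supp}\mu$), then use the Stone--Weierstrass theorem to choose, for $d$ large, an SOS polynomial $\sigma\in\Sigma[\x]_d$ that is concentrated on $\B$ in the sense that $\int_\saK\sigma\,d\mu=1$ while $\int_{\saK\setminus\B}\sigma\,d\mu$ is arbitrarily small and $\int_\saK |f|\,\sigma\,d\mu$ stays bounded; then the dual objective $\int_\saK f\sigma\,d\mu \leq (f^\star+2\varepsilon) + o(1)$, giving $a_d\leq f^\star+3\varepsilon$ for $d$ large. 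Letting $\varepsilon\downarrow 0$ yields $a^\star = f^\star$. The main obstacle is this last step: constructing the concentrating SOS densities and controlling $\int_\saK |f|\,\sigma\,d\mu$ uniformly requires a careful approximation argument (one standard route is to take $\sigma$ proportional to a high power of $1-\|\x-\x^\star\|^2/r^2$ suitably truncated, or to invoke the construction already used in \cite{lasserre2011new}); everything else is routine SDP duality and order-theoretic bookkeeping.
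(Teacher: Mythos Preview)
Your argument for item~(i) matches the paper's approach (feasibility via Theorem~\ref{th:loc} applied to $f-f^\star\geq 0$, boundedness via a diagonal entry).

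For item~(ii) there is a genuine gap. The point $\sigma\equiv 1/\mu(\saK)$ is \emph{not} in the interior of $\Sigma[\x]_d$: in the Gram-matrix parametrization it corresponds to a rank-one PSD matrix, hence lies on the boundary of the cone, so it does not yield Slater for SDP~\eqref{eq:pol_sos}. More importantly, even if you fix this (e.g.\ by taking a positive-definite Gram matrix), Slater on the \emph{inf} side only guarantees attainment on the \emph{sup} side, whereas what you actually need is attainment of $\sigma^\star$ in~\eqref{eq:pol_sos}. The paper (see the proof of the analogous Theorem~\ref{th:cvg_rat}) instead applies Slater to the \emph{other} program: any $a<f^\star$ gives $f-a>0$ on $\saK$, hence $\M_d((f-a)\,\y)\succ 0$, so $a$ is strictly feasible for~\eqref{eq:pol_mom}; standard conic duality then gives no duality gap \emph{and} attainment in~\eqref{eq:pol_sos} directly, with complementary slackness yielding the stated identity.

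For item~(iii) your monotonicity is fine, but your convergence argument is far more laborious than necessary and you correctly flag the construction of concentrating SOS densities as the main obstacle. The paper avoids this entirely by reusing Theorem~\ref{th:loc} in the reverse direction: let $a^\star:=\lim_d a_d\geq f^\star$; since $a^\star\leq a_d$ for every $d$, the scalar $a^\star$ is feasible in~\eqref{eq:pol_mom} for every $d$, i.e.\ $\M_d((f-a^\star)\,\y)\succeq 0$ for all $d\in\N$; by Theorem~\ref{th:loc} this forces $f-a^\star\geq 0$ on $\saK$, hence $a^\star\leq f^\star$. This two-line argument replaces your entire Stone--Weierstrass/concentration step, and it is the approach the paper takes in the proof of Theorem~\ref{th:cvg_rat}, which it explicitly describes as ``similar'' to the proof of Theorem~\ref{th:cvg_pol}.
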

{In fact, solving SDP \eqref{eq:pol_mom} boils down to solving a generalized eigenvalue problem for the pair of matrices $\M_d(f\,\y)$ and $\M_d(\y)$. More recently, in \cite{Las19} the first author has shown that 
$f^\star$ can also be approximated from above by considering a hierarchy of generalized eigenvalue problems indexed by $d$, but now involving Hankel matrices of size $d+1$ instead of ${n+d\choose d}$.} 
The entries of these matrices are linear in the moments of the \emph{pushforward measure} of the Lebesgue measure with respect to $f$. 
%
\paragraph{Pushforward measure.} 
\oz{Let $\mathbf{U}:\: \saK \rightarrow \mathbf{\Omega} \subseteq \mathbb{R}^m$ be a Borel measurable function. The \emph{pushforward measure} $\mathbf{U}_\# \mu$ of the measure $\mu\in\mathcal{M}(\saK)$ through $\mathbf{U}$ is the measure supported on $\U(\saK)$ defined by}
\begin{equation}
\mathbf{U}_\# \mu(C)=\mu(\mathbf{U}^{-1}(C)),
\end{equation}
for any $C\in \mathcal{B}(\Omega)$, where $\mathcal{B}(\mathbf{\Omega})$ denotes the Borel set of the compact set $\mathbf{\Omega}$, and $\U^{-1}(C)$ is the preimage of $C$ by the mapping $\U$.\\

In particular when $\U = f$ and $\mu = \lambda$, let $f_\# \lambda$ be the pushforward measure of the restriction of the Lebesgue measure $\lambda$ on $\saK$ with respect to $f$. 
Let us denote by $\y^\# = (y_d^\#)_{d \in \N}$ the sequence of moments
$$
 y_{d}^{\#} := \int_{[0,+\infty)} u^d \, \d f_\# \lambda (u) = \int_{\saK} f(\mathbf{x})^{d} \d\lambda \,.
$$
{As in \cite{Las19}, consider the hierarchy of SDP programs, indexed by $d\in\N$:}
\begin{align}
\label{eq:pol_pfm_mom}
\begin{array}{rl}
a_d^\# = \displaystyle\sup_{a \in\R}& \,a \\
\mbox{s.t.}& \M_d(u \, \y^\#) \, \succeq \, a \, \M_d( \y^\#)  \,.
\end{array}
\end{align}
Since the support of $f_\# \lambda$ is contained in the interval $[f^\star, +\infty)$, the results from \cite[Theorem 3.3]{lasserre2011bounding} imply that $a_d$ is an optimal solution of SDP \eqref{eq:pol_pfm_mom} for all $d\in\N$ and $a_d^\# \downarrow f^\star$ as $d \to \infty$ (see also \cite[Theorem 2.3]{Las19}).

In the sequel, we extend this framework based on the pushforward measure to the case of rational functions.
Using the pushforward measure is particularly interesting in the case where $N \ll n$, because the dimension of the initial problem can be drastically reduced.
{In particular, if $N=1$ we reduce to a $2$-dimensional problem.}
%
\section{Minimizing {a single} rational function}
\label{sec:singlerat}

In this section we consider the case $N=1$ in the sum given in \eqref{quotients}. 
The goal is thus to compute
\begin{equation}
\label{eq:minfg}
\rho = \min_{\mathbf{x}\in \saK} \frac{f(\mathbf{x})}{g(\mathbf{x})},
\end{equation}
where $f$ and $g$ are polynomials, $g$ being positive on the compact set $\mathbf{K}\subset\R^n$. 
By compactness of $\saK$, and since the function $\mathbf{x}\mapsto \frac{f(\mathbf{x})}{g(\mathbf{x})}$ is continuous, the minimum is attained for some $\mathbf{x}^\star\in \saK$ so that $\rho = f(\mathbf{x}^\star)/g(\mathbf{x}^\star)$.
\subsection{An instance of the generalized moment problem}
\label{sec:gmp}
{The next result from \cite{jibetean2006global} provides
an alternative expression for $\rho$ (see also  \cite[Section 5.8]{lasserre2010moments}). }
For the sake of completeness, we also recall its basic proof.
\begin{proposition}(\cite{jibetean2006global})
\label{prop:minfg_measure}
 The solution $\rho$ to \eqref{eq:minfg} is the optimal value of the following infinite-dimensional linear problem (LP):
\begin{align}
\label{eq:minfg_measure}
\begin{array}{rl}
\rho =\displaystyle\inf_{\mu \in \mathcal{M}(\saK)}&  \, \displaystyle\int_{\saK} f(\x) \,\d \mu(\x) \\
\mbox{s.t.}&  \displaystyle\int_{\saK} g(\x) \, \d \mu(\x) \, = \, 1 \,.
\end{array}
\end{align}
\end{proposition}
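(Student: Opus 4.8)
The plan is to establish the two inequalities separately, using crucially that $g$ is positive on the compact set $\saK$.

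For the bound ``optimal value of \eqref{eq:minfg_measure} $\le \rho$'', I would simply exhibit a feasible measure attaining the value $\rho$. Since $\saK$ is compact and $\x\mapsto f(\x)/g(\x)$ is continuous, there is a minimizer $\x^\star\in\saK$ with $\rho=f(\x^\star)/g(\x^\star)$. As $g(\x^\star)>0$, the scaled Dirac measure $\mu^\star:=g(\x^\star)^{-1}\,\delta_{\x^\star}$ is a well-defined element of $\mathcal{M}(\saK)$; it satisfies $\int_\saK g\,\d\mu^\star = g(\x^\star)^{-1} g(\x^\star)=1$, hence is feasible for \eqref{eq:minfg_measure}, and $\int_\saK f\,\d\mu^\star = f(\x^\star)/g(\x^\star)=\rho$. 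Therefore the infimum in \eqref{eq:minfg_measure} is at most $\rho$, and is in fact attained at $\mu^\star$.

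For the reverse bound ``optimal value of \eqref{eq:minfg_measure} $\ge \rho$'', I would use the pointwise inequality $f(\x)\ge \rho\,g(\x)$ for every $\x\in\saK$, which is immediate from the definition of $\rho$ together with $g(\x)>0$. Integrating this inequality against an arbitrary feasible $\mu\in\mathcal{M}(\saK)$ (a positive measure, so integration preserves the inequality) gives $\int_\saK f\,\d\mu \ge \rho\int_\saK g\,\d\mu = \rho$, using the normalization constraint $\int_\saK g\,\d\mu=1$. Taking the infimum over all feasible $\mu$ shows the optimal value of \eqref{eq:minfg_measure} is at least $\rho$.

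Combining the two inequalities shows that the optimal value of the LP \eqref{eq:minfg_measure} equals $\rho$ and is attained at $\mu^\star$. There is no genuinely hard step; the only points needing care are that compactness of $\saK$ guarantees existence of the minimizer $\x^\star$, and that strict positivity of $g$ on $\saK$ is exactly what makes both the Dirac construction and the pointwise inequality $f\ge\rho g$ legitimate.
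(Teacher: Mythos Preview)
Your proof is correct and follows essentially the same route as the paper's own argument: exhibit the scaled Dirac measure $\mu^\star=g(\x^\star)^{-1}\delta_{\x^\star}$ to show the infimum is at most $\rho$, then integrate the pointwise inequality $f\ge\rho g$ (valid because $g>0$ on $\saK$) against any feasible $\mu$ to get the reverse bound.
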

\begin{proof}
 Let $ \mu^\star = \frac{1}{g(\mathbf{x}^\star)}\delta_{\mathbf{x}^\star}$ be the Dirac measure centered at $\mathbf{x}^\star$ weighted by $1/g(\mathbf{x}^\star)$. We have $\int_{\saK} g(\mathbf{x})\d\mu^\star(\mathbf{x}) = 1$ so that the infimum in \eqref{eq:minfg_measure} is upper bounded by $\int_{\saK} f \d\mu^\star = \frac{f(\mathbf{x}^\star)}{g(\mathbf{x}^\star)} = \rho$. 
 Now, since $g$ is positive and by definition of $\rho$, we have $\rho g(\x)\leq f(\mathbf{x})$ for any $\x\in \saK$. Integrating against any measure $\mu\in\mathcal{M}(\saK)$ such that $\int g(\mathbf{x}) \d\mu(\mathbf{x})=1$ yields $\rho \leq \int_{\saK} f(\mathbf{x}) \d\mu(\mathbf{x})$, thus $\rho$ is upper bounded by the infimum in \eqref{eq:minfg_measure}, which concludes the proof.
\end{proof}
 {For more details about semidefinite relaxations of the infinite-dimensional LP \eqref{eq:minfg_measure} and its dual LP,
the interested reader is referred to \cite{jibetean2006global}.}

As in the case of polynomial minimization, one can fix a reference measure $\mu \in \mathcal{M}(\saK)$ (e.g., $\mu = \lambda$) with moment sequence $\y$.
Then, one way to approximate \eqref{eq:minfg_measure} from above is to replace the set of measures $\mathcal{M}(\saK)$ by a subset of measures with SOS polynomial densities (with respect to $\mu$) of degree at most $2d$, $d\in\N$. 
\if{
Let $\lambda$ be the Lebesgue measure on $K$ and denote by $\Sigma_n$ the set of SOS polynomials on $\R^n$, meaning all the polynomials written as the sum of squares of polynomials.
Restricting $\mu$ in \eqref{eq:minfg_measure} to be of the form $\d\mu=\sigma\d\lambda$ with $\sigma\in\Sigma_n$, we obtain
\begin{equation}
 \rho = \inf_{\substack{\sigma\in\Sigma_n \\ \text{s.t.} \int g \sigma \d\lambda=1}} \int f \sigma\d\lambda .
\end{equation}
To show the above equality, \todo{we use ???}.
This expression allows us to numerically estimate $\rho$ by solving a generalized eigenvalue problem on the moment matrix \todo{give more details here}. To do so, one needs to have access to the moments
$$
 m^f_\alpha = \int x^\alpha f(x)\d\lambda(x),
 \quad\text{and}\quad 
 m^g_\alpha = \int x^\alpha g(x)\d\lambda(x).
$$
\todo{I know I don't use the correct notation here...}
for any multi-index $\alpha\in\N^n$.
However, the performance of this approach degenerates when the dimension $n$ of the problem is large.
\todo{We have to strengthen this point: this is the actual motivation for using pushforward measures.}
}\fi
Doing so, we obtain the following hierarchy of SDP programs, indexed by $d\in\N$:
\begin{align}
\label{eq:rho_sos1}
\begin{array}{rl}
\rho_{d} =\displaystyle\inf_{\sigma\in \Sigma[x]_d}&  \, \displaystyle\int_{\saK} f(\x)\,\sigma(\x) \,\d \mu(\x): \\
\mbox{s.t.}&  \displaystyle\int_{\saK} g(\x) \, \sigma(\x)\, \d \mu(\x) \, = \, 1 \,. \\
\end{array}
\end{align}
The dual of \eqref{eq:rho_sos1} is given by
\begin{align}
\label{eq:rho_mom1}
\begin{array}{rl}
a_d=\displaystyle\sup_{a \in \R}& \,a: \\
\mbox{s.t.}& \M_{d}(f \, \y) \, \succeq \, a \, \M_d(g\, \y) \,.
\end{array}
\end{align}

We can now derive the rational function analog of Theorem \ref{th:cvg_pol}.
\begin{theorem}
\label{th:cvg_rat}
Let $\saK \subseteq \R^n$ be compact and $\mu \in \mathcal{M}(\saK)$ with moment sequence $\y$.
Let $f,g \in \R[\x]$, with $g$ positive on $\saK$.
Consider the hierarchy of semidefinite programs \eqref{eq:rho_mom1} indexed by $d\in\N$.
Then:
\begin{itemize}
\item[(i)] SDP \eqref{eq:rho_mom1} has an optimal solution $a_d \geq \rho$ for every $d\in\N$. 
\item[(ii)] There is no duality gap between SDP \eqref{eq:rho_sos1} and SDP \eqref{eq:rho_mom1}. Moreover, SDP \eqref{eq:rho_sos1} has an optimal solution $\sigma^\star \in \Sigma[\x]_{d}$ which satisfies $\int_{\saK} (f(\x) - a_d g(\x) \sigma^\star(\x) \d \mu(\x) = 0$. 
\item[(iii)] The sequence $(a_d)_{d \in \N}$ is monotone nonincreasing and $a_d \downarrow f^\star$ as $d\to \infty$.
\end{itemize}
\end{theorem}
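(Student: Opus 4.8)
The plan is to mirror the proof of Theorem~\ref{th:cvg_pol} from \cite{lasserre2011new}, treating \eqref{eq:rho_sos1}--\eqref{eq:rho_mom1} as the natural ``denominator-weighted'' analogue of \eqref{eq:pol_sos}--\eqref{eq:pol_mom}. The three items should be tackled in the order (i), then (ii), then (iii), since the duality statement in (ii) is what makes the monotone convergence in (iii) transparent.

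First, for (i), I would show that SDP \eqref{eq:rho_mom1} is feasible and bounded above by $\rho$, and that the supremum is attained. Feasibility: since $g$ is positive on $\saK$ and $\mu$ has support exactly $\saK$, the matrix $\M_d(g\,\y)$ is positive definite (it is the moment matrix of the positive measure $g\,\d\mu$ restricted to degree $d$, and positive definiteness follows because $L_{g\y}(p^2)=\int_\saK p^2 g\,\d\mu>0$ for any nonzero $p\in\R[\x]_d$, using that $\saK$ is not finite, or more carefully invoking the nondegeneracy assumptions implicit in the setup). Taking $a$ very negative makes $\M_d(f\,\y)-a\,\M_d(g\,\y)\succeq 0$, so the feasible set is nonempty. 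For the bound $a_d\ge\rho$: by definition of $\rho$ in \eqref{eq:minfg} we have $f(\x)-\rho\,g(\x)\ge 0$ on $\saK$, hence by Theorem~\ref{th:loc} $\M_d((f-\rho g)\,\y)=\M_d(f\,\y)-\rho\,\M_d(g\,\y)\succeq 0$, so $a=\rho$ is feasible and $a_d\ge\rho$. Attainment: the feasible set is closed, and it is bounded above because $\M_d(g\,\y)\succ 0$ forces $a\le\lambda_{\max}(\M_d(g\,\y)^{-1/2}\M_d(f\,\y)\M_d(g\,\y)^{-1/2})$; a closed nonempty bounded subset of $\R$ has a maximum. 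This also exhibits \eqref{eq:rho_mom1} as a generalized eigenvalue problem for the pencil $(\M_d(f\,\y),\M_d(g\,\y))$.

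Next, for (ii), I would verify weak duality directly — for feasible $\sigma$ and feasible $a$, write $\int_\saK (f-ag)\sigma\,\d\mu = L_{(f-ag)\y}(\sigma)\ge 0$ because $\sigma\in\Sigma[\x]_d$ and $\M_d((f-ag)\y)\succeq 0$, giving $\int_\saK f\sigma\,\d\mu\ge a\int_\saK g\sigma\,\d\mu=a$, hence $\rho_d\ge a_d$. For strong duality (zero gap) and primal attainment, the cleanest route is a conic/Slater argument: \eqref{eq:rho_mom1} has a strictly feasible point (take $a$ strictly negative, so $\M_d(f\,\y)-a\M_d(g\,\y)\succ 0$), so the dual SDP \eqref{eq:rho_sos1} attains its optimum with no duality gap by standard SDP conic duality (as in the proof of Theorem~3.4 in \cite{lasserre2011new}, or \cite{lasserre2010moments}). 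The complementarity relation $\int_\saK (f-a_d g)\sigma^\star\,\d\mu=0$ is then exactly the complementary slackness condition: the primal slack $\M_d((f-a_d g)\y)\succeq 0$ pairs with the dual optimal Gram matrix of $\sigma^\star$ to zero. (There is a typo to fix in the statement: a closing parenthesis is missing after $g(\x)$ in item (ii).)

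Finally, for (iii), monotonicity is immediate because $\Sigma[\x]_d\subseteq\Sigma[\x]_{d+1}$, so the infimum in \eqref{eq:rho_sos1} over a larger set can only decrease: $\rho_{d+1}\le\rho_d$, and by (ii) $a_{d+1}\le a_d$. Combined with $a_d\ge\rho$ from (i), the sequence converges to some limit $\ell\ge\rho$. For $\ell=\rho$, I would argue on the primal side: by Proposition~\ref{prop:minfg_measure}, $\rho$ is the infimum of $\int f\,\d\nu$ over $\nu\in\mathcal{M}(\saK)$ with $\int g\,\d\nu=1$; a density argument shows measures of the form $\sigma\,\d\mu$ with $\sigma$ SOS are weak-$\star$ dense enough in $\mathcal{M}(\saK)$ to approximate the optimal $\nu^\star$ (this is precisely the content of \cite[Theorem 3.2 / Lemma]{lasserre2011new}: continuous densities, hence polynomial densities, hence SOS densities after absorbing a small constant, approximate any admissible measure on a compact set arbitrarily well in the relevant functionals), so $\rho_d\downarrow\rho$, and by zero duality gap $a_d\downarrow\rho$. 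I expect the main obstacle to be this last density/approximation step: one must take a near-optimal $\nu$ for \eqref{eq:minfg_measure}, approximate it by $\sigma\,\d\mu$ while simultaneously controlling both $\int f\sigma\,\d\mu\to\int f\,\d\nu$ and the normalization $\int g\sigma\,\d\mu\to 1$ (rescaling $\sigma$ to restore the constraint exactly, using $g>0$ on $\saK$ so the normalizing factor is bounded and bounded away from zero), and checking that the SOS (rather than merely nonnegative) restriction costs nothing in the limit. Everything else is a routine transcription of the polynomial case, with $\M_d(g\,\y)$ playing the role previously played by $\M_d(\y)$. One caveat worth noting in the write-up: the limit in (iii) should read $a_d\downarrow\rho$, not $a_d\downarrow f^\star$ as printed, since here the relevant optimal value is $\rho$, the minimum of the ratio.
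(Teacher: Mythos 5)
Your items (i) and (ii) follow essentially the paper's route: feasibility of $a=\rho$ via Theorem~\ref{th:loc}, boundedness of the feasible set, Slater's condition for the moment SDP giving dual attainment and zero duality gap, and complementary slackness for the orthogonality relation. Two small remarks there. First, the paper bounds the feasible $a$'s using only the $(1,1)$ entry of $\M_d((f-ag)\,\y)$, namely $L_\y(f)-aL_\y(g)\ge 0$ together with $L_\y(g)>0$; this avoids your reliance on $\M_d(g\,\y)\succ 0$, which, as you yourself flag, requires that no nonzero polynomial of degree at most $d$ vanish identically on $\saK$ (if that fails you would need the extra observation that $\ker\M_d(g\,\y)\subseteq\ker\M_d(f\,\y)$ for your $\lambda_{\max}$ bound to make sense). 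Second, ``take $a$ strictly negative'' does not by itself give strict feasibility: you need $a$ sufficiently negative \emph{and} $\M_d(g\,\y)\succ0$, whereas the paper takes any $a<\rho$ so that $f-ag>0$ on $\saK$ and invokes Theorem~\ref{th:loc}.

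The real divergence is in the convergence half of (iii). The paper's argument is a short dual one: the limit $\rho^\star=\lim_d a_d$ satisfies $\rho^\star\le a_d$, hence $\M_d((f-\rho^\star g)\,\y)\succeq0$ for every $d$, and the converse direction of Theorem~\ref{th:loc} then forces $f-\rho^\star g\ge0$ on $\saK$, i.e.\ $\rho^\star\le\rho$. You instead propose a primal density argument, approximating the optimal measure of \eqref{eq:minfg_measure} by SOS densities, and you correctly identify this as the main obstacle but leave it as a sketch; as written this is a gap. It can be closed without any new density lemma: apply Theorem~\ref{th:cvg_pol} to the polynomial $h:=f-\rho g$, whose minimum on $\saK$ is $0$, to obtain SOS polynomials $\sigma_d$ with $\int_\saK\sigma_d\,\d\mu=1$ and $\int_\saK h\,\sigma_d\,\d\mu\to0$; rescaling by $\int_\saK g\,\sigma_d\,\d\mu\ge\min_{\saK} g>0$ then yields feasible densities for \eqref{eq:rho_sos1} with objective value tending to $\rho$. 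Either way, the paper's two-line localization argument is the intended one and is complete as it stands. Your corrections of the two typos in the statement (the limit should be $a_d\downarrow\rho$ rather than $f^\star$, and a parenthesis is missing in (ii)) are right.
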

\begin{proof}
The proof is similar to {that  of} Theorem \ref{th:cvg_pol}.\\
(i) First we prove that SDP \eqref{eq:rho_mom1} has always a feasible solution.
Recall that the rational function $\mathbf{x}\mapsto \frac{f(\mathbf{x})}{g(\mathbf{x})}$ admits a minimum $\rho$ on the compact set $\saK$ as a continuous function and because $g$ is positive. 
Let us take $a = \rho$. 
Then $f - a g$ is nonnegative on $\saK$. Hence, by Theorem \ref{th:loc}, one has $\M_d(f \, \y) - a \M_d( g\, \y) \, \succeq \, 0$. 
Next, we show that the value of any feasible point $a$ is bounded from above.
The first diagonal entry of the localizing matrix $\M_d((f -  a g) \, \y)$ is equal to $L_\y(f) - a L_\y(g)$.
By feasibility, one has $\M_d((f -  a g) \, \y) \succeq 0$, which implies that $L_\y(f) - a L_\y(g) \geq 0$.
Moreover, since $g$ is positive on $\saK$, Theorem \ref{th:loc} implies that $L_\y(g) > 0$. Thus $a < \frac{L_\y(f)}{L_{\y}(g)}$, the desired result.\\
(ii) As already proved in (i), any scalar $a < \rho$ is a feasible solution of SDP \eqref{eq:rho_mom1}. 
Since $f - a g >0$ on $\saK$, by Theorem \ref{th:loc} $a$ is a strictly feasible solution of SDP \eqref{eq:rho_mom1}. Therefore, Slater's condition  \cite[Theorem 3.1.]{Vandenberghe94SDP}  holds for SDP \eqref{eq:rho_mom1} and its dual SDP \eqref{eq:rho_sos1} admits an optimal solution $\sigma^\star$. 
Hence, there is no duality gap between SDP \eqref{eq:rho_sos1} and SDP \eqref{eq:rho_mom1}, and $\sigma^\star$ satisfies the desired equality.\\
(iii) One has $a_d \leq a_k$ for all $d \geq k$ since $\M_d((f -  a_d g) \, \y) \succeq 0$ implies that $\M_k((f -  a_d g) \, \y) \succeq 0$.
Therefore, the sequence $(a_d)_{d \in \N}$ is monotone nonincreasing. Since it is bounded from below by $\rho$, then it converges to $\rho^\star \geq \rho$.
\jean{Next, since $a_d \geq \rho^\star$ for all $d$, $\rho^\star$ is feasible for \eqref{eq:rho_mom1} for all $d$.} 
Then by Theorem \ref{th:loc}, $f -  \rho^\star g$ is nonnegative on $\saK$, and therefore  $\rho^\star\leq \rho$, the desired result.
\end{proof}

\oz{Notice that \eqref{eq:rho_mom1} and \eqref{eq:pol_mom} are essentially the same problem, the only difference being that $\M_d(\y)$ in  \eqref{eq:pol_mom} is replaced with $\M_d(g\,\y)$ in \eqref{eq:rho_mom1}. 
In order to circumvent the curse of dimensionality associated with these problems, we propose in the next section to reformulate the problem using with  pushforward measures \cite{Las19}.
}


\subsection{With the help of the pushforward measure}
\label{sec:pfm}
Let $\U: \saK \rightarrow \mathbb{R}^2$ be the function defined by
\begin{equation}
\U(\x):= 
\begin{pmatrix}
f(\mathbf{x})\\
g(\mathbf{x})
\end{pmatrix}=:\begin{pmatrix}
\U_1(\x)\\ \U_2(\x)
\end{pmatrix}.
\end{equation}
Replacing $f(\mathbf{x})$ by $u= \mathbf{U}_1(\mathbf{x})$ and $g(\mathbf{x})$ by $v= \U_2(\mathbf{x})$ in \eqref{eq:minfg} \jean{yields:}
$$
 \rho = \min_{(u,v)\in \U(\saK)} \frac{u}{v}\,,
$$
\jean{and by Proposition \ref{prop:minfg_measure}:}
\begin{align}
\label{eq:minfg_pfm}
\begin{array}{rl}
\rho =\displaystyle\inf_{\mu \in \mathcal{M}(\U(\saK))}&  \, \displaystyle\int_\saK u \,\d \mu(u,v) \\
\mbox{s.t.}&  \displaystyle\int_\saK v \, \d \mu(u,v) \, = \, 1 \,,
\end{array}
\end{align}
where the variable $\mu$ is now a measure supported on the $2$-dimensional image set $\U(\saK) \subset\R^2$.
Now \jean{fix the reference measure $\mu := \U_\# \lambda$, the pushforward  of Lebesgue measure $\lambda$ on $\saK$, by the mapping $\U$.
By construction its support is $\U(\saK)$.
%
%
Then consider the following SDP \jean{indexed by} $d\in\N$:}
\begin{align}
\label{eq:rho_pfm_sos1}
\begin{array}{rl}
\rho_{d}^{\#}=\displaystyle\inf_{\sigma}& \,\displaystyle\int_\saK u\,\sigma(u,v)\,\d \U_\# \lambda(u,v) \\
\mbox{s.t.}& \displaystyle\int_\saK v \, \sigma(u,v)\,\d\U_\# \lambda(u,v)\,=\,1\\
&\sigma\in\Sigma[u,v]_d\,.
\end{array}
\end{align}
The dual of SDP \eqref{eq:rho_pfm_sos1} is
\begin{align}
\label{eq:rho_pfm_mom1}
\begin{array}{rl}
a_d^\# = \displaystyle\sup_{a \in\R}& \,a \\
\mbox{s.t.}& \M_d(u \, \y^\#) \, \succeq \, a \, \M_d(v\, \y^\#)  \,.
\end{array}
\end{align}
\if{
\begin{align}
 \rho &=
 \inf_{\substack{\sigma\in\Sigma_2 \\ \text{s.t.} \int z_2 \sigma(\mathbf{z}) \d(\mathbf{T}_\sharp\lambda)=1}} \int z_1 \sigma(\mathbf{z}) \d(\mathbf{T}_\sharp\lambda) \\
 &= \inf_{\substack{\sigma\in\Sigma_2 \\ \text{s.t.} \int g(\mathbf{x}) \sigma(\mathbf{T}(\mathbf{x})) \d\lambda=1}} \int f(\mathbf{x}) \sigma(\mathbf{T}(\mathbf{x})) \d\lambda
\end{align}
}\fi
\jean{For $d$ fixed, arbitrary, solving \eqref{eq:rho_pfm_mom1} numerically first requires computing the entries
of matrices $\M_d(u \, \y^\#)$ and $\M_d(v\, \y^\#)$, that is, computing the moments:}
\begin{equation}
    \label{eq:require}
  y_{i,j}^{\#} := \int_{\R^2} u^i v^j\, \d\U_\# \lambda(u,v) = \int_{\saK} f(\mathbf{x})^{i}g(\mathbf{x})^{j}\,\d\lambda(\mathbf{x}) \,,\quad \forall (i,j)\in\N^2_{2d}\,.
\end{equation}
\jean{Then, once this is done, 
solving \eqref{eq:rho_pfm_mom1} is a relatively easy generalized eigenvalue problem.}
\jean{In general, computing integrals 
\eqref{eq:require} is out of reach for 
arbitrary semialgebraic sets. However, if the set $\saK$  is ``simple'', e.g., the unit box $[-1,1]^n$, the unit sphere $\bS^{n-1}$ (or their  image by any affine mapping), then this computation becomes simpler and can be done exactly and in closed form.}

\begin{theorem}
\label{th:cvg_rat_pfm}
Let $\saK \subseteq \R^n$ be compact.
Let $f,g \in \R[\x]$, such that $g$ is positive on $\saK$.
Consider the hierarchy of semidefinite programs \eqref{eq:rho_sos1} indexed by $d\in\N$.
Then:
\begin{itemize}
\item[(i)] SDP \eqref{eq:rho_pfm_mom1} has an optimal solution $a_d^\# \geq \rho$ for every $d\in\N$.
\item[(ii)] There is no duality gap between SDP \eqref{eq:rho_pfm_sos1} and SDP \eqref{eq:rho_pfm_mom1}, and SDP \eqref{eq:rho_pfm_sos1} has an optimal solution $\sigma^\star \in \Sigma[u,v]_{d}$. 
\item[(iii)] The sequence $(a_d^\#)_{d \in \N}$ is monotone nonincreasing and $a_d^\# \downarrow \rho$ as $d\to \infty$.
\end{itemize}
\end{theorem}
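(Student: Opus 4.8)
The plan is to observe that the pushforward SDPs \eqref{eq:rho_pfm_sos1}--\eqref{eq:rho_pfm_mom1} are nothing but the SDPs \eqref{eq:rho_sos1}--\eqref{eq:rho_mom1} of Section \ref{sec:gmp} applied to a two-dimensional instance of problem \eqref{eq:minfg}, so that the statement will follow from Theorem \ref{th:cvg_rat}. Concretely, I would set $\widetilde{\saK} := \U(\saK) \subseteq \R^2$, let $\widetilde f(u,v) = u$ and $\widetilde g(u,v) = v$ be the two coordinate polynomials, and take $\widetilde\mu := \U_\#\lambda$ as the reference measure, whose moment sequence is precisely $\y^\#$ by construction. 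With these choices, $\M_d(u\,\y^\#)$ and $\M_d(v\,\y^\#)$ are exactly the moment and localizing matrices of $\widetilde\mu$ associated with $\widetilde g = v$ and $\widetilde f = u$, so \eqref{eq:rho_pfm_mom1} coincides with \eqref{eq:rho_mom1}; and, via the transfer formula $\int_{\widetilde\saK} h\,\d\U_\#\lambda = \int_\saK h(f(\x),g(\x))\,\d\lambda(\x)$ for Borel $h$, \eqref{eq:rho_pfm_sos1} coincides with \eqref{eq:rho_sos1}, for this data. Note also that $\min_{(u,v)\in\widetilde\saK} u/v = \min_{\x\in\saK} f(\x)/g(\x) = \rho$.

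Next I would check that the hypotheses of Theorem \ref{th:cvg_rat} hold for $(\widetilde\saK, \widetilde\mu, \widetilde f, \widetilde g)$. The set $\widetilde\saK = \U(\saK)$ is compact, being the continuous image of the compact set $\saK$; and $\widetilde g = v$ is positive on $\widetilde\saK$ because $g>0$ on $\saK$. It remains to verify that $\widetilde\mu = \U_\#\lambda$ is a finite Borel measure whose support is exactly $\widetilde\saK$, and this is the one genuinely technical point. I would prove $\operatorname{supp}(\U_\#\lambda) = \overline{\U(\operatorname{supp}\lambda)} = \U(\saK)$ by the standard argument: the inclusion ``$\subseteq$'' holds because $\U^{-1}$ of the complement of $\overline{\U(\operatorname{supp}\lambda)}$ is disjoint from $\operatorname{supp}\lambda$; the reverse inclusion holds because for any open set $V$ meeting $\U(\saK)$ the preimage $\U^{-1}(V)$ is a nonempty open set meeting $\operatorname{supp}\lambda = \saK$ (here I use the standing assumption that the reference measure has full support on $\saK$), hence of positive measure, so $\U_\#\lambda(V)>0$. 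A minor bookkeeping issue is that $\widetilde\saK$ need not lie inside $[-1,1]^2$, which is the normalization used in the statement of Theorem \ref{th:loc}; this is harmless, since the localizing–matrix criterion holds for an arbitrary compact support — its proof does not use the box normalization, and this general form is precisely what was invoked for the polynomial pushforward via \cite[Theorem 3.3]{lasserre2011bounding}.

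With these checks in place, Theorem \ref{th:cvg_rat} applies verbatim and delivers (i) existence of an optimizer $a_d^\#$ with $a_d^\# \geq \rho$; (ii) zero duality gap between \eqref{eq:rho_pfm_sos1} and \eqref{eq:rho_pfm_mom1}, together with an optimal SOS density $\sigma^\star \in \Sigma[u,v]_d$; and (iii) monotonicity of $(a_d^\#)_{d\in\N}$ and convergence $a_d^\# \downarrow \rho$ — the limit being exactly $\rho$ because $\M_d((u-\rho^\star v)\,\y^\#)\succeq 0$ for all $d$ (a consequence of $\M_d((u-a_d^\# v)\,\y^\#)\succeq 0$ and $a_d^\#\ge\rho^\star$, together with $\M_d(v\,\y^\#)\succeq0$) forces $u-\rho^\star v \geq 0$ on $\operatorname{supp}(\U_\#\lambda) = \U(\saK)$, hence $f-\rho^\star g \geq 0$ on $\saK$ and $\rho^\star \leq \rho$. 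The main obstacle is therefore not any hard estimate but the correct identification $\operatorname{supp}(\U_\#\lambda) = \U(\saK)$, which is what makes the ``only if'' direction of Theorem \ref{th:loc} usable in step (iii); everything else is a transcription of the proof of Theorem \ref{th:cvg_rat} with $(\saK, f, g, \mu)$ replaced by $(\U(\saK), u, v, \U_\#\lambda)$.
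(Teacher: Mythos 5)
Your proposal is correct and follows essentially the same route as the paper, whose proof of Theorem~\ref{th:cvg_rat_pfm} is precisely the one-line reduction to Theorem~\ref{th:cvg_rat} via the substitutions $\saK \leftarrow \U(\saK)$, $\mu \leftarrow \U_\#\lambda$, $f \leftarrow u$, $g \leftarrow v$. You go further than the paper by explicitly verifying the hypotheses of that reduction --- in particular the identification $\operatorname{supp}(\U_\#\lambda) = \U(\saK)$, which the paper asserts ``by construction'' but which is indeed the point that makes Theorem~\ref{th:loc} applicable in step (iii).
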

\begin{proof}
This is direct application of Theorem \ref{th:cvg_rat}  with the notation $\saK \leftarrow  \U(\saK)$, $\mu \leftarrow \d\U_\# \lambda$, $n \leftarrow m$, $f \leftarrow u$ and $g \leftarrow v$.
\end{proof}

So as soon as the information \eqref{eq:require} required to
fill up the entries of the two matrices $\M_d(u\,\y^\#)$ and $\M_d(v\,\y^\#)$ is available, solving \eqref{eq:rho_pfm_mom1} is a relatively easy generalized eigenvalue problem \jean{ for two real symmetric matrices of reasonable $O(d^2)$ size.} 

However even though integrals \eqref{eq:require} can be computed exactly in closed-form for simple sets, they can be very tedious to compute if one has to expand the integrand in the monomial basis. \jean{As already mentioned in the introduction,  even for moderate dimensions $(n,d)$, computing efficiently \eqref{eq:require} is a scientific challenge of its own, with dramatic consequences for 
solving \eqref{eq:rho_pfm_mom1}. For instance, if $\saK$ is a simplex then 
some efficient methods are described in in \cite{baldoni} and in particular 
when the degree is fixed, \eqref{eq:require} is a tractable problem.}

\section{Minimizing a sum of rational functions}
\label{sec:multirat}
This section focuses on to the case where there are potentially several terms in the sum given in \eqref{quotients}, i.e., the case where $N \geq 1$. \jean{Of course by reducing to same common denominator,
the problem reduces to minimlizing a single fraction. However the degree in 
both numerator and denominator can be too high and a specific approach is needed. The one that we propose
is similar in spirit to that in \cite{bugarin2016minimizing} 
for computing lower bounds.}

\subsection{An instance of the generalized moment problem}
\label{sec:gpmN}

We start \jean{by recalling a} result stated in Theorem 2.1 of \cite{bugarin2016minimizing}. 
The problem of computing $\rho$ can be cast as a particular instance of the generalized moment problem (GMP), namely 
\begin{align}
\label{eq:rho}
\begin{array}{rl}
\rho=\displaystyle\min_{\mu_i}& \left\{\,\displaystyle\sum_{i=1}^N \int_\saK f_i(\x)\,\d \mu_i(\x): \right.\\
\mbox{s.t.}&\displaystyle\int_\saK \x^\alpha\,g_i(\x)\,\d \mu_i(\x)\,=\,\displaystyle\int_\saK \x^\alpha\,g_1(\x)\,\d\mu_1(\x) \,, \: 
\alpha \in \N^n   \,,  \: 1<i\leq N\\
&\displaystyle\int_\saK g_1(\x) \, \d \mu_1(\x) \, = \, 1 \,, \\
&\left.\mu_1,\dots,\mu_N\in\mathcal{M}(\saK)\,
\right\}.
\end{array}
\end{align}
{Inspired by the single fraction case in \S\ref{sec:singlerat}, one can obtain an approximation for $\rho$} by restricting each measure $\mu_i \in \mathcal{M}(\bS^{n-1})$ to be absolutely continuous w.r.t. the reference measure (e.g., the Lebesgue measure on $\bS^{n-1}$) with an SOS density of degree $2 d$, \jean{and considering  moment equality constraints only up to order $s$. So consider the semidefinite program:}
\begin{align}
\label{eq:rho_sos}
\begin{array}{rl}
\rho_{d,s} =\displaystyle\inf_{\sigma_i}& \left\{\,\displaystyle\sum_{i=1}^N \int_\saK f_i(\x)\,\sigma_i(\x) \,\d \lambda: \right.\\
\mbox{s.t.}&  \displaystyle\int_\saK \x^\alpha\,g_i(\x)\,\sigma_i(\x) \,\d \lambda\,=\,\displaystyle\int_\saK \x^\alpha\,g_1(\x)\,\sigma_1(\x)\, \d \lambda   \,, \: 
\vert\alpha\vert\,\leq s  \,,  \:1<i\leq N\\
& \displaystyle\int_\saK g_1(\x) \, \sigma_1(\x)\, \d \lambda \, = \, 1 \,, \\
&\left.\sigma_1,\dots,\sigma_N\in\Sigma[\x]_{d}\,
\right\}.
\end{array}
\end{align}
The dual of \eqref{eq:rho_sos} is given by
\begin{align}
\label{eq:rho_mom}
\begin{array}{rl}
\displaystyle\sup_{a,h_i}& \left\{\,a: \right.\\
\mbox{s.t.}& \M_{d}(f_1 \, \y) \, \succeq \, a \, \M_d(g_1\, \y) + \displaystyle\sum_{i=2}^N  \M_d(h_i \, g_1 \, \y)  \,,\\
& \M_d(f_i \, \y) + \M_d(h_i \, g_i\, \y) \, \succeq \, 0 \,, \:1<i\leq N \,, \\
&\left. a\in \R,h_2,\dots,h_N\in\R[\x]_{s}\,
\right\}.
\end{array}
\end{align}
\jean{
\begin{proposition}
\label{rk:feas}
SDP \eqref{eq:rho_mom} has a feasible solution for all $(d,s)$ while
SDP \eqref{eq:rho_sos} has a feasible solution for sufficiently large $d$.
\end{proposition}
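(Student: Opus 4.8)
\emph{Proof strategy.} I would prove the two assertions separately, starting with the dual \eqref{eq:rho_mom}, which is the easy one. For each $i$ put $\rho_i:=\min_{\x\in\saK}f_i(\x)/g_i(\x)$, attained because $f_i/g_i$ is continuous on the compact set $\saK$ (recall $g_i>0$ there). Then $f_i-\rho_i g_i\geq0$ on $\saK$, so Theorem~\ref{th:loc} gives $\M_d\big((f_i-\rho_i g_i)\,\y\big)\succeq0$ for every $d$. I would therefore pick the \emph{constant} multipliers $h_i:=-\rho_i$ for $i=2,\dots,N$ — which lie in $\R[\x]_s$ for every $s\geq0$ — together with $a:=\sum_{i=1}^N\rho_i$. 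By linearity of the localizing map, $\sum_{i=2}^N\M_d(h_i g_1\,\y)=-\big(\sum_{i=2}^N\rho_i\big)\,\M_d(g_1\,\y)$, so the first constraint of \eqref{eq:rho_mom} becomes $\M_d\big((f_1-\rho_1 g_1)\,\y\big)\succeq0$, which holds, while the remaining constraints are exactly $\M_d\big((f_i-\rho_i g_i)\,\y\big)\succeq0$. Hence \eqref{eq:rho_mom} is feasible for every pair $(d,s)$.

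For the primal \eqref{eq:rho_sos} I would first dispose of the easy block: take $\sigma_1:=\big(\int_\saK g_1\,\d\lambda\big)^{-1}$, a positive constant (an SOS of degree $0$), which already satisfies the normalization $\int_\saK g_1\sigma_1\,\d\lambda=1$. Let $c=(c_\alpha)_{|\alpha|\le s}$ with $c_\alpha:=\int_\saK\x^\alpha g_1\sigma_1\,\d\lambda$ be the truncated moment vector of the probability measure $\d\nu:=g_1\sigma_1\,\d\lambda$, whose support is all of $\saK$. It then remains to produce, for every $i=2,\dots,N$ and for $d$ large enough, an SOS $\sigma_i\in\Sigma[\x]_d$ with $\int_\saK\x^\alpha g_i\sigma_i\,\d\lambda=c_\alpha$ for all $|\alpha|\le s$; together with $\sigma_1$ such a tuple is a feasible point of \eqref{eq:rho_sos}.

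To obtain $\sigma_i$ I would fix $i$ and consider the convex cones $C_i^{(d)}:=\big\{\big(\int_\saK\x^\alpha g_i\sigma\,\d\lambda\big)_{|\alpha|\le s}:\sigma\in\Sigma[\x]_d\big\}$, which increase with $d$, and show that $\bigcup_d C_i^{(d)}$ is dense in the truncated moment cone $\mathcal C_s(\saK)$ of $\mathcal M(\saK)$. By Tchakaloff's theorem every element of $\mathcal C_s(\saK)$ is the truncated moment vector of a finitely atomic measure $\sum_k w_k\delta_{x_k}$ (with $x_k\in\saK$, $w_k\ge0$), and one approximates the latter weak-$*$ by $g_i\sigma\,\d\lambda$ with $\sigma:=\sum_k\frac{w_k}{g_i(x_k)}\phi_{x_k,m}$, where $\phi_{x_k,m}(\x):=Z_{k,m}^{-1}\prod_{l=1}^n\big(1-\eta^2(x_l-(x_k)_l)^2\big)^{2m}$ is, for $\eta$ small and $m$ large, a normalized SOS ``spike'' concentrating at $x_k$ (each factor is a perfect square, and for $\eta$ small the bracket lies in $(0,1]$ on $\saK$ with value $1$ only at $\x=x_k$, so the usual Laplace-type argument applies since $x_k\in\operatorname{supp}\lambda$). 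Since $\nu$ has full support, $c$ lies in $\operatorname{relint}\mathcal C_s(\saK)$, and an increasing family of convex cones whose union is dense in $\mathcal C_s(\saK)$ must eventually contain any fixed point of $\operatorname{relint}\mathcal C_s(\saK)$: approximate the vertices of a small simplex around $c$ by finitely many points of $\bigcup_d C_i^{(d)}$, observe that these finitely many points all lie in one $C_i^{(D_i)}$, and use convexity. Taking $d:=\max_{2\le i\le N}D_i$ produces $\sigma_2,\dots,\sigma_N$, completing the argument.

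The genuinely delicate step is this last one for the primal: one must check that the monotone family $C_i^{(d)}$ captures a \emph{fixed} interior point of $\mathcal C_s(\saK)$ after finitely many stages, and one should phrase everything with relative interiors so that the argument also covers lower-dimensional feasible sets such as $\saK=\bS^{n-1}$ (working in the span of the monomials of degree $\le s$ modulo the vanishing ideal of $\saK$). The dual statement, by contrast, costs nothing beyond the constant-multiplier choice and Theorem~\ref{th:loc}.
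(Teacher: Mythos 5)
Your proof of feasibility of the dual \eqref{eq:rho_mom} is exactly the paper's argument: constant multipliers $h_i=-\rho_i$, $a=\sum_i\rho_i$, and two applications of Theorem~\ref{th:loc}; nothing to add there. For the primal \eqref{eq:rho_sos} you take a genuinely different route. The paper's proof is a short explicit construction: writing each $g_i=p_i/q_i$ with $p_i,q_i$ SOS (Hilbert--Artin) and setting $\sigma_i=\frac{1}{C}\,q_i\prod_{j\neq i}p_j$, all products $g_i\sigma_i$ collapse to the single polynomial $\frac{1}{C}\prod_j p_j$, so the moment-matching constraints hold identically for \emph{all} $\alpha\in\N^n$ (not just $|\alpha|\le s$) and the normalization holds by the choice of $C$; feasibility follows for every $d\ge\max_i\deg\sigma_i$, a threshold independent of $s$. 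Your argument (constant $\sigma_1$, then Tchakaloff plus SOS spikes to show density of the cones $C_i^{(d)}$ in the truncated moment cone, then the relative-interior/perturbed-simplex step to capture the fixed target $c$) is correct as far as I can check, provided the last step is carried out inside the affine hull as you indicate; it is non-constructive and yields a threshold $d_0$ that depends on $s$, which is the weaker (but still sufficient) reading of the statement. On the other hand it buys genuine generality: you only use $g_i>0$ on the compact set $\saK$, whereas the paper's appeal to Hilbert--Artin tacitly requires each $g_i$ to be nonnegative on all of $\R^n$, a hypothesis not present in the problem formulation. So the two proofs trade an explicit, $s$-uniform degree bound under a stronger positivity assumption against a soft, $s$-dependent existence argument under the stated hypotheses.
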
}
\begin{proof}
\jean{For each $i=1,\dots,N$, and as $g_i$ is positive on the compact set $\saK$, the rational function $\frac{f_i}{g_i}$ is continuous and thus has a minimum $a_i$ on $\saK$.
Next, consider the constant polynomial $h_i = -a_i$ and $a = \sum_{i=1}^N a_i$.
Then $f_i + h_i g_i$ is nonnegative on $\saK$ and by Theorem \ref{th:loc}, $\M_d(f_i \, \y) + \M_d(h_i \, g_i\, \y) \, \succeq \, 0$.
In addition, 
$$\frac{f_1}{g_1} \geq a_1 = a - \sum_{i=2}^N a_i = a + \sum_{i=2}^N h_i,$$
which implies $f_1 \geq a g_1 + \sum_{i=2}^{N} h_i g_1$ because $g_1$ is positive.
Again by Theorem \ref{th:loc}, $\M_d(f_1 \, \y) \, \succeq \, a \, \M_d(g_1\, \y) + \sum_{i=2}^N  \M_d(h_i \, g_1 \, \y)$,
which proves that $(a,h_2,\dots,h_N)$ is a feasible solution for SDP \eqref{eq:rho_mom}.}
However, note that SDP \eqref{eq:rho_sos} may not have a feasible solution for given $d,s \in \N$ even if the infinite-dimensional LP \eqref{eq:rho} has one. 
\jean{But SDP  \eqref{eq:rho_sos} indeed has a feasible solution provided that $d$ is sufficienly large.} Since each $g_i$ is globally positive,  by Hilbert-Artin's representation, there exist SOS polynomials $p_i$ and $q_i$ such that $g_i = \frac{p_i}{q_i}$.
\jean{Define 
\[
C := \int_{\saK} \prod_{j=1}^N p_i(\x) \, \d \lambda(\x)\quad 
\mbox{and}\quad 
\sigma_i :=  \frac{q_i}{C}  \prod_{\substack{1 \leq j \leq N  \\ j \neq i}} p_i \,,
\]
and let $d_i:={\rm deg}(\sigma_i) = \deg q_i + \displaystyle\sum_{\substack{1 \leq j \leq N  \\ j \neq i}} \deg p_j$.}

\jean{Each SOS polynomial $\sigma_i$ satisfies $g_i \sigma_i = \frac{1}{C}  \prod_{j=1}^N p_i$ and $\displaystyle\int_\saK g_1(\x) \sigma_1(\x) \, \d \lambda(\x) = 1$, and thus $(\sigma_1,\dots,\sigma_N)$ is feasible for SDP  \eqref{eq:rho_sos}
whenever $d \geq \max_{1\leq i \leq N}  d_i $.}
\end{proof}
To overcome the feasibility issue of SDP \eqref{eq:rho_sos}, one remedy is to allow one an $\varepsilon$-violation of the equality constraints with $\varepsilon > 0$. \jean{Doing so \eqref{eq:rho_sos} now reads:}

\begin{align}
\label{eq:rho_sos_eps}
\begin{array}{rl}
\rho_{s} (\varepsilon) =\displaystyle\inf_{\sigma_i}& \left\{\,\displaystyle\sum_{i=1}^N \int_\saK f_i(\x)\,\sigma_i(\x) \,\d \lambda(\x): \right.\\
\mbox{s.t.}& \bigg| \displaystyle\int_\saK \x^\alpha\,g_i(\x)\,\sigma_i(\x) \,\d \lambda(\x)\,-\,\displaystyle\int_{\saK} \x^\alpha\,g_1(\x)\,\sigma_1(\x)\, \d \lambda(\x) \bigg| \leq \varepsilon  \,, \\ 
&\vert\alpha\vert\,\leq s  \,,  \:1<i\leq N\\
& \bigg| \displaystyle\int g_1 \, \sigma_1\, \d \lambda \, - \, 1 \bigg| \leq \varepsilon \,, \\
&\left.\sigma_1,\dots,\sigma_N\in\Sigma[\x]_{d}\,
\right\},
\end{array}
\end{align}
with $d \geq d_0(s)$, for a suitably well-chosen (in particular large enough) $d_0(s)$. 
In order to prove convergence of the approximation bounds $\rho_{s}(\varepsilon)$ to $\rho$, when $s \to \infty$ and $\varepsilon \to 0$, we need to rely on specific approximation results
\jean{provided in  \cite{de2020convergence,laurent2020near,slot2020improved} which depend on the choice of the reference measure $\mu$ and the set $\saK$;} see \cite[Table 2]{de2019survey} as well as \cite{de2020convergence,laurent2020near,slot2020improved} for more details.

{In the sequel we illustrate the approach for the particular case} when $\mu$ is the Lebesgue measure on the unit cube $\saK = [-1, 1]^n$. 
We \jean{need} the following auxiliary approximation result, easily derived from Corollary 2 in \cite{slot2020improved}.
\begin{theorem}
\label{th:diracSOS}
Let $f$ be a polynomial with global minimizer $\x^\star$ on the unit cube $\saK = [-1,1]^n$. There exist a sequence of SOS polynomials $(\sigma^{(d)})_{d}$ such that $\sigma^{(d)}$ is of degree $2d$, does not depend on the degree of $f$, and satisfies 
\begin{align}
\label{eq:diracSOS}
\int_{\saK} \sigma^{(d)}(\x) \d \lambda(\x) = 1 \,, \quad 
\int_{\saK} f(\x) \, \sigma^{(d)}(\x) \d \lambda(\x) - f(\x^\star) \leq \frac{C(f)}{d^2} \,,
\end{align}
\jean{for all $d$, and} where $C(f)$ depends on the degree of $f$, $n$.
\end{theorem}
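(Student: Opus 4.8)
The plan is to derive Theorem~\ref{th:diracSOS} from Corollary~2 of \cite{slot2020improved}, whose proof already exhibits, for the Lebesgue measure on $[-1,1]^n$, an explicit sequence of SOS densities of degree $2d$ realizing the $O(1/d^2)$ rate. The only point that needs care is that this density can be taken to depend on $f$ \emph{only through its minimizer} $\x^\star$ (and on $d$ and $n$), and in particular not on $\deg f$; once this is isolated, the constant $C(f)$ pops out of a Taylor estimate.

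First I would recall the one-dimensional building block used in \cite{de2017improved,slot2020improved}: for a point $t^\star\in[-1,1]$ and an integer $k$ there is a univariate SOS polynomial $w_k(\,\cdot\,;t^\star)$ of degree $2k$ with $\int_{-1}^{1} w_k(t;t^\star)\,\d t = 1$ whose mass is concentrated near $t^\star$, at the scale $1/k$ when $t^\star\in(-1,1)$ and at the faster scale $1/k^2$ when $t^\star\in\{-1,1\}$ (the endpoint speed-up being the standard Markov--Bernstein phenomenon for polynomials near the boundary of $[-1,1]$). Concretely, one has $\int_{-1}^{1}(t-t^\star)^2 w_k(t;t^\star)\,\d t \le C_1/k^2$ when $t^\star$ is interior, and $\int_{-1}^{1}|t-t^\star|\,w_k(t;t^\star)\,\d t \le C_1/k^2$ when $t^\star\in\{\pm1\}$, with $C_1$ an absolute constant; crucially $w_k(\,\cdot\,;t^\star)$ depends only on $t^\star$ and $k$. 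I would then set $\sigma^{(d)}(\x):=\prod_{i=1}^n w_{\lfloor d/n\rfloor}(x_i;x_i^\star)$, which is SOS, of total degree at most $2d$ independently of $\deg f$, satisfies $\int_{\saK}\sigma^{(d)}\,\d\lambda = 1$ by Fubini, and depends on $f$ only via $\x^\star$. (Alternatively, one may invoke directly the multivariate construction of \cite{slot2020improved}, which already yields degree exactly $2d$ with $n$ hidden in the constant.)

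It then remains to bound $\int_{\saK}\bigl(f(\x)-f(\x^\star)\bigr)\sigma^{(d)}(\x)\,\d\lambda(\x)$, which is nonnegative since $f\ge f(\x^\star)$ on $\saK$. For this I would telescope over the coordinates and Taylor-expand $f$ one variable at a time about $\x^\star$: at a coordinate $i$ with $x_i^\star\in(-1,1)$ the first-order term vanishes by first-order optimality, so the contribution is controlled by the second moment of $w_{\lfloor d/n\rfloor}$, hence $O(1/d^2)$; at a coordinate with $x_i^\star\in\{\pm1\}$ the relevant one-sided derivative has the favourable sign but need not vanish, and the contribution is controlled by the first moment of $w_{\lfloor d/n\rfloor}$, again $O(1/d^2)$ thanks to the endpoint concentration; the remaining second- and higher-order terms are $O(1/d^2)$ as well. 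Summing the $n$ coordinate contributions and bounding the partial derivatives of $f$ uniformly on the compact cube gives the claimed inequality with a constant $C(f)$ depending only on $\deg f$ (through, say, the sup-norms on $\saK$ of the derivatives of $f$) and on $n$.

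The main obstacle is this last step: keeping every error term at the $O(1/d^2)$ level through the coordinatewise telescoping, which is exactly where the interior-versus-endpoint dichotomy for the univariate bumps is indispensable, since a naive Lipschitz estimate would only give $O(1/d)$ at boundary coordinates. As this analysis is precisely what \cite{de2017improved} and \cite{slot2020improved} carry out, I would cite it rather than reproduce it, and confine the new content to the bookkeeping showing that $\sigma^{(d)}$, and hence its degree, does not depend on $\deg f$.
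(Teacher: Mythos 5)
Your proposal is correct and follows essentially the same route as the paper, which states Theorem~\ref{th:diracSOS} without a separate proof, deriving it directly from Corollary~2 of \cite{slot2020improved} (whose underlying construction is exactly the coordinatewise product of univariate SOS bumps with the interior-versus-endpoint moment dichotomy that you describe). Your additional bookkeeping---that the density depends on $f$ only through $\x^\star$, so its degree is independent of $\deg f$, while $C(f)$ absorbs the derivative bounds---is precisely the point the paper leaves implicit.
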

\begin{theorem}
\label{th:cvgN}
Assume that each denominator $g_i$ takes only positive values on $\saK$, for each $i=1,\dots,N$.
Let us fix $d,s \in \N$ and define the following parameter:
\begin{align*}
\Delta_{d,s} := \min_{\sigma_i \in \Sigma[\x]_d} \max & \Bigg\lbrace \max_{\substack{|\alpha|\leq s \\ i = 2,\dots,N}}  \left\lbrace \bigg| \displaystyle\int \x^\alpha\,g_i\,\sigma_i \,\d \lambda\,-\,\displaystyle\int \x^\alpha\,g_1\,\sigma_1\, \d \lambda \bigg| \right\rbrace,  
\bigg| \displaystyle\int g_1 \, \sigma_1\, \d \lambda \, - \, 1 \bigg|, \\
& \bigg| \sum_{i=1}^N \int f_i\,\sigma_i \,\d \lambda - \rho \bigg|
 \Bigg\rbrace \,.\\
\end{align*}
Then, there exists a sequence of positive integers $(d_0(s))_{s\in\N}$ such that
\jean{\begin{equation}
   \label{eq:conv-delta} 
\lim_{s \to \infty} \Delta_{d_0(s),s} = 0 \quad \text{and} \quad  \lim_{\substack{s \to \infty}} \rho_{s}\left(\frac{1}{\sqrt{d_0(s)}}\right) = \rho \,.
\end{equation}}
\end{theorem}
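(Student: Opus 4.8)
The plan is to sandwich $\rho_{s}(\varepsilon)$ between an explicit upper bound obtained from near‑Dirac SOS densities and a lower bound obtained by a weak‑$\ast$ compactness argument, and to choose $d_0(s)$ so that both $\Delta_{d_0(s),s}$ and $\varepsilon_s:=1/\sqrt{d_0(s)}$ tend to $0$. Since $\saK=[-1,1]^n$ is compact, each $g_i$ is continuous and positive hence bounded below by $g_{\min}:=\min_i\min_{\x\in\saK}g_i(\x)>0$, and $\sum_i f_i/g_i$ is continuous, there is a global minimizer $\x^\star\in\saK$ with $\rho=\sum_{i=1}^N f_i(\x^\star)/g_i(\x^\star)$. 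Apply Theorem~\ref{th:diracSOS} to the polynomial $\x\mapsto\|\x-\x^\star\|^2$, whose global minimum on $\saK$ is $0$ at $\x^\star$: this produces SOS polynomials $w_d$ of degree $2d$ with $\int_\saK w_d\,\d\lambda=1$ and $\int_\saK\|\x-\x^\star\|^2\,w_d\,\d\lambda\le C_0/d^2$, where $C_0$ depends only on $n$ and $\x^\star$. Since $w_d\ge 0$ has unit mass, Cauchy--Schwarz gives, for every polynomial $p$,
\[
\Big|\int_\saK p\,w_d\,\d\lambda-p(\x^\star)\Big|\le L_p\int_\saK\|\x-\x^\star\|\,w_d\,\d\lambda\le L_p\sqrt{C_0}\,/\,d,
\]
where $L_p$ is any Lipschitz constant of $p$ on $\saK$ (controlled by $\deg p$, $n$, and the coefficients of $p$). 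Define the candidate densities $\sigma_i^{(d)}:=w_d/g_i(\x^\star)\in\Sigma[\x]_d$.

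Evaluating the three quantities defining $\Delta_{d,s}$ at $\sigma_i=\sigma_i^{(d)}$: for $|\alpha|\le s$ the integral $\int_\saK\x^\alpha g_i\,\sigma_i^{(d)}\,\d\lambda=g_i(\x^\star)^{-1}\int_\saK(\x^\alpha g_i)\,w_d\,\d\lambda$ lies within $g_{\min}^{-1}L_{\alpha,i}\sqrt{C_0}/d$ of $(\x^\star)^\alpha$, so every moment‑mismatch term is at most $2g_{\min}^{-1}L(s)\sqrt{C_0}/d$ with $L(s):=\max_{|\alpha|\le s,\,i}L_{\alpha,i}$ finite and independent of $d$; similarly $|\int_\saK g_1\sigma_1^{(d)}\,\d\lambda-1|$ and $|\sum_i\int_\saK f_i\sigma_i^{(d)}\,\d\lambda-\rho|$ are $O(1/d)$ with constants depending only on $f_i,g_i,n$. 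Hence $\Delta_{d,s}\le K(s)/d$ for an explicit finite $K(s)$. Now set $d_0(s):=\max\{\lceil K(s)^2\rceil,\lceil sK(s)\rceil,s\}$, enlarged if necessary to exceed the threshold implicit in \eqref{eq:rho_sos_eps}. Then $d_0(s)\to\infty$, $\Delta_{d_0(s),s}\le K(s)/d_0(s)\le 1/s\to 0$ (the first claim in \eqref{eq:conv-delta}), and $\Delta_{d_0(s),s}\le K(s)/d_0(s)\le 1/\sqrt{d_0(s)}=\varepsilon_s$ because $d_0(s)\ge K(s)^2$. Thus $(\sigma_1^{(d_0(s))},\dots,\sigma_N^{(d_0(s))})$ is feasible for \eqref{eq:rho_sos_eps} at $\varepsilon=\varepsilon_s$, so $\rho_s(\varepsilon_s)\le\sum_i\int_\saK f_i\sigma_i^{(d_0(s))}\,\d\lambda\le\rho+\Delta_{d_0(s),s}\le\rho+1/s$; in particular $\limsup_{s\to\infty}\rho_s(\varepsilon_s)\le\rho$.

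For the lower bound, pick $s_k\to\infty$ with $\rho_{s_k}(\varepsilon_{s_k})\to\ell:=\liminf_{s\to\infty}\rho_s(\varepsilon_s)$ (finite by the previous paragraph), and $\sigma^{(k)}$ feasible at $(s_k,\varepsilon_{s_k})$ with $\sum_i\int_\saK f_i\sigma_i^{(k)}\,\d\lambda\le\rho_{s_k}(\varepsilon_{s_k})+1/k$; such $\sigma^{(k)}$ exist since the feasible set is nonempty and, testing the constraints at $\alpha=0$, every $\mu_i^{(k)}:=\sigma_i^{(k)}\,\d\lambda$ has mass $\le(1+2\varepsilon_{s_k})/g_{\min}$, so the objective is bounded below on it. The $\mu_i^{(k)}$ have uniformly bounded mass and are supported on the fixed compact set $\saK$, hence, after passing to a further subsequence, $\mu_i^{(k)}\rightharpoonup\mu_i^\infty\in\mathcal{M}(\saK)$ (weak‑$\ast$) for every $i$. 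Since $\x^\alpha g_i$, $g_1$ and the $f_i$ are continuous and, for each fixed $\alpha$, $|\alpha|\le s_k$ for $k$ large, passing to the limit in the $\varepsilon_{s_k}$‑relaxed constraints of \eqref{eq:rho_sos_eps} shows $(\mu_1^\infty,\dots,\mu_N^\infty)$ is feasible for the GMP \eqref{eq:rho}; consequently $\sum_i\int_\saK f_i\,\d\mu_i^\infty\ge\rho$. But $\sum_i\int_\saK f_i\,\d\mu_i^\infty=\lim_k\sum_i\int_\saK f_i\sigma_i^{(k)}\,\d\lambda=\ell$, so $\ell\ge\rho$, i.e. $\liminf_{s\to\infty}\rho_s(\varepsilon_s)\ge\rho$. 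Together with the previous paragraph this gives $\rho_s(\varepsilon_s)\to\rho$, the second claim in \eqref{eq:conv-delta}.

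The routine parts are the moment estimates — they reduce to a crude Lipschitz bound against the single needle density $w_d$, which simultaneously controls every weighted moment $\int_\saK\x^\alpha g_i\,w_d\,\d\lambda$ at the harmless cost of an $O(1/d)$ (rather than $O(1/d^2)$) rate — together with the bookkeeping choice of $d_0(s)$. The genuinely delicate step is the lower bound: because \eqref{eq:rho_sos_eps} both truncates the moment constraints at order $s$ and relaxes them by $\varepsilon$, it is not a conic relaxation of \eqref{eq:rho} in a way that would make $\rho_s(\varepsilon)\ge\rho$ automatic, so one really needs the weak‑$\ast$ compactness argument — and, inside it, the uniform mass bound extracted from the $\alpha=0$ constraint, which keeps the limit measures finite and lands them back in the feasible set of the GMP.
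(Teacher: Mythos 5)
Your proof is correct, and its central construction is the same as the paper's: build near-Dirac SOS densities concentrated at a global minimizer $\x^\star$ via Theorem \ref{th:diracSOS}, rescale by $g_i(\x^\star)^{-1}$, verify that all constraint violations and the objective gap are $O(1/d)$ with an $s$-dependent constant, and then choose $d_0(s)$ so large that this bound is dominated by both $1/s$ and $1/\sqrt{d_0(s)}$. You differ in two respects. First, where the paper applies Theorem \ref{th:diracSOS} to the tailored polynomial $p_{i,s}(\x)=(f_i(\x)-f_i(\x^\star))^2+\sum_{|\alpha|\le s}(\x^\alpha g_i(\x)-{\x^{\star}}^{\alpha} g_i(\x^\star))^2$ and then extracts each individual moment error via Jensen's inequality, you apply it once to $\|\x-\x^\star\|^2$ and control every weighted moment by a Lipschitz bound combined with Cauchy--Schwarz; both routes land on the same $O(K(s)/d)$ estimate, yours being slightly more economical (a single needle density handles all constraints simultaneously) at the cost of Lipschitz constants $L(s)$ in place of the paper's $C_{i,s}^{1/2}$. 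Second --- and this is a genuine addition --- you prove the inequality $\liminf_{s}\rho_s(\varepsilon_s)\ge\rho$ by a weak-$\ast$ compactness argument: the uniform mass bound extracted from the $\alpha=0$ constraints, extraction of weak-$\ast$ limit measures on the compact set $\saK$, and passage to the limit in the truncated, relaxed constraints to show the limits are feasible for the GMP \eqref{eq:rho}. The paper's proof only exhibits the near-feasible densities, which yields $\Delta_{d_0(s),s}\to0$ and $\limsup_{s}\rho_s(\varepsilon_s)\le\rho$; since \eqref{eq:rho_sos_eps} both truncates the moment constraints at order $s$ and relaxes them by $\varepsilon$, the reverse inequality is not automatic, so your compactness step explicitly closes a direction that the paper leaves implicit.
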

\begin{proof}
Let $\x^\star$ be a global minimizer of the sum of rational fractions $\sum_{i=1}^N \frac{f_i}{g_i}$ on the unit cube $\saK$.
The proof follows the same line of reasoning as \cite[Theorem 7]{de2019survey}.
Let us consider for each $i=1,\dots,N$ the polynomial 
\begin{align}
\label{eq:pis}
p_{i,s}(\x) := (f_i(\x)-f_i(\x^\star))^2 +  \sum_{|\alpha|\leq s} (\x^{\alpha} g_i(\x) - {\x^{\star}}^{\alpha} g_i(\x^\star))^2 \,,
\end{align}
which minimal value over $\saK$ is equal to 0 and attained at $x^{\star}$.
Then, by Theorem \ref{th:diracSOS}, there exist a sequence of polynomials $({\sigma'}_i^{(d)})_d$ such that for all $d$,
\begin{align}
\label{eq:diracSOS2}
\int_{\saK} {\sigma'}_i^{(d)}(\x) \d \lambda = 1 \,, \quad 
\int_{\saK} p_{i,s}(\x) \, {\sigma'}_i^{(d)}(\x) \d \lambda \leq \frac{C_{i,s}}{d^2} \,,
\end{align}
where $C_{i,s}$ depends on the degrees of $p_{i,s}$, $n$, $\|\nabla p_{i,s} (x^\star)\|_2$ and $\max_{\x \in \saK} \|\nabla^2 p_{i,s}(\x) \|_2$.
Let us define $\sigma_i^{(d)}(\x) := g_i(\x^\star)^{-1}  {\sigma'}_i^{(d)}(\x)$. 
By assumption, $g_i$ takes only positive values on $\saK$ thus  $\sigma_i^{(d)}$ is a well defined SOS polynomial.
Then, one has for all $\alpha$ satisfying $|\alpha| \leq s$:
\begin{align*}
\bigg| \int_{\saK} \x^{\alpha} g_i(\x) {\sigma}_i^{(d)}(\x) \d \lambda(\x) - {\x^{\star}}^{\alpha}  \bigg|^2  & = g_i(\x^\star)^{-2} \bigg| \int_{\saK} (\x^{\alpha} g_i(\x) - {\x^{\star}}^{\alpha} g_i(\x^\star)) {\sigma'}_i^{(d)}(\x) \d \lambda(\x)  \bigg|^2 \\
& \leq g_i(\x^\star)^{-2} \int_{\saK} (\x^{\alpha} g_i(\x) - {\x^{\star}}^{\alpha} g_i(\x^\star))^2 {\sigma'}_i^{(d)}(\x) \d \lambda(\x)   \\
& \leq g_i(\x^\star)^{-2} \int_{\saK} p_{i,s}(\x) \, {\sigma'}_i^{(d)}(\x) \d \lambda(\x) \leq \frac{g_i(\x^\star)^{-2} C_{i,s} }{d^2} \,,
\end{align*}
where we used the equality from \eqref{eq:diracSOS2} to obtain the first equality, Jensen's inequality to derive the first inequality as well as the inequality from \eqref{eq:diracSOS2} to obtain the last inequality.
This implies that for all $d\in\mathbb N$ one has
\[ 
\bigg| \int_{\saK} \x^{\alpha} g_i(\x) {\sigma}_i^{(d)}(\x) \d \lambda(\x) - {\x^{\star}}^{\alpha}  \bigg| \leq \frac{g_i(\x^\star)^{-1} C_{i,s}^{1/2} }{d} \,.
\]
As a direct consequence, we obtain for $\alpha = 0$ and $i=1$:
\[ 
\bigg| \int_{\saK}  g_1(\x) {\sigma}_1^{(d)}(\x) \d \lambda(\x) - 1  \bigg| \leq \frac{g_1(\x^\star)^{-1} C_{1,s}^{1/2} }{d} \,.
\]
and by using the triangular inequality
\[
\bigg| \int_{\saK} \x^{\alpha} g_i(\x) {\sigma}_i^{(d)}(\x) \d \lambda(\x) - \int_{\saK} \x^{\alpha} g_1(\x) {\sigma}_1^{(d)}(\x) \d \lambda(\x)  \bigg| \leq \frac{g_i(\x^\star)^{-1} C_{i,s}^{1/2} + g_1(\x^\star)^{-1} C_{1,s}^{1/2} }{d} \,.
\]
Similarly, one proves that
\[
\bigg| \int_{\saK} f_i(\x) {\sigma}_i^{(d)}(\x) \d \lambda(\x) - \frac{f_i(\x^\star)}{g_i(\x^\star)}  \bigg| \leq \frac{g_i(\x^\star)^{-1} C_{i,s}^{1/2} }{d} \,,
\]
which implies
\[
\bigg| \sum_{i=1}^N \int_{\saK} f_i\,\sigma_i^{(d)} \,\d \lambda(\x) - \rho \bigg| \leq \frac{\sum_{i=1}^N g_i(\x^\star)^{-1} C_{i,s}^{1/2} }{d} \,.
\]
By selecting for any $s \in \N$
\[
d_0(s) := \max_{1\leq i \leq N} \bigg\lbrace \left( g_i(\x^\star)^{-1} C_{i,s}^{1/2} + g_1(\x^\star)^{-1} C_{1,s}^{1/2} \right)^2 \bigg\rbrace \,,
\]
one has for all $d\in\mathbb{N}$ satisfying $d \geq d_0(s)$
\[
\bigg| \int_{\saK} \x^{\alpha} g_i(\x) {\sigma}_i^{(d)}(\x) \d \lambda(\x) - \int_{\saK} \x^{\alpha} g_1(\x) {\sigma}_1^{(d)}(\x) \d \lambda(\x)  \bigg| \leq \frac{1 }{\sqrt{d_0(s)}} \,,
\]
yielding the desired convergence \jean{result \eqref{eq:conv-delta}.}
\end{proof}
\begin{remark}
Note that \jean{with $s$ fixed}, the sequence  $(\rho_{d,s})_{d\in\N}$ is monotone nonincreasing.
Indeed when one fixes the degree of each $h_i$ and optimizes only over $a$, the value of the supremum decreases when $d$ increases, i.e., $\rho_{d+1,s} \leq \rho_{d,s}$.
Besides, for each $d$, $(\rho_{d,s})_{s\in\N}$ is monotone nondecreasing. 
When one fixes the order of the localizing matrices, and optimizes over polynomials $h_i$ of increasing degrees, the value of the supremum increases since the feasible set becomes larger, i.e., $\rho_{d,s} \leq \rho_{d,s+1}$.
However, there is no systematic dominance relation between $\rho_{d+1,s+1}$ and $\rho_{d,s}$.
\end{remark}
For the general setting of a \jean{compact} set $\saK \subset \R^n$ 
and a fixed arbitrary Borel measure $\mu \in \mathcal{M}(\saK)$, one can rely on the following corollary of Theorem \ref{th:cvg_pol}:
\begin{corollary}
\label{cor:diracSOS}
Let $f$ be a polynomial with global minimizer $\x^\star$ on the compact set $\saK$. 
There exists a sequence of SOS polynomial $(\sigma^{(d)}_{d\in\N}$ of degree $2d$ and a function $c:\N \to [0,+\infty)$ such that
\begin{align}
\label{eq:diracSOSgen}
\int_{\saK} \sigma^{(d)}(\x) \d \lambda(\x) = 1 \,, \quad 
\int_{\saK} f(\x) \, \sigma^{(d)}(\x) \d \lambda(\x) - f(\x^\star) \leq c(d) \,,
\end{align}
and $c(d) \to 0$ as $d \to \infty$.
\end{corollary}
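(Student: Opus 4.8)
The plan is to read the statement off directly from Theorem \ref{th:cvg_pol} applied with the reference measure taken to be $\lambda$ (more precisely, any fixed finite Borel measure whose support is exactly $\saK$, which is what $\lambda$ denotes in this general setting). First I would record the trivial observation that, since $\x^\star$ is a global minimizer of $f$ on $\saK$, we have $f(\x^\star) = f^\star := \min_{\x\in\saK} f(\x)$. Then I invoke Theorem \ref{th:cvg_pol}: at level $d$ the SDP \eqref{eq:pol_mom} has optimal value $a_d \geq f^\star$, there is no duality gap with the dual \eqref{eq:pol_sos}, and \eqref{eq:pol_sos} admits an optimal solution $\sigma^{(d)} := \sigma^\star \in \Sigma[\x]_d$; in particular $\sigma^{(d)}$ is SOS of degree at most $2d$ and satisfies the normalization $\int_\saK \sigma^{(d)}\,\d\lambda = 1$, which is the first equality in \eqref{eq:diracSOSgen}.

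Next I would exploit the complementarity relation furnished by Theorem \ref{th:cvg_pol}(ii), namely $\int_\saK (f(\x) - a_d)\,\sigma^{(d)}(\x)\,\d\lambda(\x) = 0$. Combined with $\int_\saK \sigma^{(d)}\,\d\lambda = 1$, this yields $\int_\saK f(\x)\,\sigma^{(d)}(\x)\,\d\lambda(\x) = a_d$. Setting $c(d) := a_d - f(\x^\star) = a_d - f^\star$, we obtain $\int_\saK f\,\sigma^{(d)}\,\d\lambda - f(\x^\star) = c(d)$, and $c(d) \geq 0$ because $a_d \geq f^\star$, so indeed $c:\N\to[0,+\infty)$. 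Finally, Theorem \ref{th:cvg_pol}(iii) gives $a_d \downarrow f^\star$ as $d\to\infty$, hence $c(d)\to 0$, completing \eqref{eq:diracSOSgen}. If desired one can further note that $(c(d))_d$ is monotone nonincreasing, inherited from $(a_d)_d$.

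There is essentially no obstacle: the sole genuine input is the convergence of the upper-bound hierarchy of \cite{lasserre2011new}, i.e.\ Theorem \ref{th:cvg_pol} itself, and the rest is bookkeeping. The only point deserving a word of care is the choice of reference measure when $\saK$ has empty interior (for instance $\saK = \bS^{n-1}$), in which case the Lebesgue measure restricted to $\saK$ is trivial; one should then read $\lambda$ as the appropriate surface (or Haar) measure, or more generally as any fixed finite Borel measure with support $\saK$. Theorem \ref{th:cvg_pol} is stated for an arbitrary such $\mu$, so the argument goes through unchanged.
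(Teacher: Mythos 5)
Your derivation is correct and is exactly what the paper intends: it presents this statement as a direct corollary of Theorem \ref{th:cvg_pol} without further proof, and your argument (take the optimal SOS density $\sigma^{(d)}=\sigma^\star$ from \eqref{eq:pol_sos}, use the normalization and the complementarity relation to get $\int_{\saK} f\,\sigma^{(d)}\,\d\lambda = a_d$, and set $c(d)=a_d-f^\star\downarrow 0$) is the intended bookkeeping. Your remark about choosing a reference measure whose support is exactly $\saK$ is a sensible clarification consistent with the hypotheses of Theorem \ref{th:cvg_pol}.
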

We briefly outline how to adapt the convergence proof of Theorem \ref{th:cvgN}  for the general setting.
\jean{Invoking} Corollary \ref{cor:diracSOS}, one obtains a function $c_{i,s} : \N \to [0, +\infty)$ associated to the polynomial $p_{i,s}$ defined in \eqref{eq:pis} with $c_{i,s}(d_0(s)) \to 0$ as $s\to\infty$. 
Then one can replace $\frac{C_{i,s}}{d^2}$ by $c_{i,s}(d)$ in the proof of Theorem \ref{th:cvgN}.

\subsection{With the help of pushforward measures}
\label{sec:pfmN}
\jean{To approximate $\rho$ in \eqref{eq:rho} we follow
and a methodology similar to that in the univariate case.}
Consider the mapping $\U:\saK\to\R^{2N}$ defined by:
\begin{equation}
    \label{eq:mapping1}
\x\mapsto \U(\x)=\begin{bmatrix} f_1(\x)  & g_1(\x) & \ldots & f_N(\x) & g_N(\x)\end{bmatrix}^\top \,.
\end{equation}
Let $\u = (u_1,\dots,u_N)$, $\v = (v_1,\dots,v_N) \in \R^N$ and let $\U_\# \lambda (\u,\v)$ be the pushforward of the restriction of the uniform measure $\lambda$ on $\saK$ with respect to $\U$.
Then
\[\rho=\min_{\x\in \saK} \sum_{i=1}^N\frac{f_i(\x)}{g_i(\x)} \, = \, 
\min_{(\u,\v)\,\in \U(\saK) }\,\sum_{i=1}^n\frac{u_i}{v_i}.\]
Then for every $d,s\in\N$ define:
\begin{align}
\label{eq:rho_pfm_sos}
\begin{array}{rl}
\rho_{d,s}^{\#}=\displaystyle\min_{\sigma_i}& \left\{\,\displaystyle\sum_{i=1}^N \int_\saK u_i\,\sigma_1(\u,\v)\,\d \U_\#\lambda(\u,\v) : \right.\\
\mbox{s.t.}&\displaystyle\int_\saK \u^\alpha\v^\beta\,v_i\,\sigma_i(\u,\v)\,\d \U_\#\lambda(\u,\v)  =\,\displaystyle\int_\saK \u^\alpha\v^\beta\, v_1\,\sigma_1(\u,\v) \, \d\U_\#\lambda(\u,\v)\,,\\ 
&\vert\alpha+\beta\vert\,\leq s  \,,\: 1<i\leq N\\
&\displaystyle\int_\saK u_1 \, \sigma_1(\u,\v)\,\d\U_\#\lambda(\u,\v)\,=\,1\\
&\left.\sigma_i\in\Sigma[\u,\v]_d\,
\right\}.
\end{array}
\end{align}
The dual of SDP \eqref{eq:rho_pfm_sos} is
\begin{align}
\label{eq:rho_pfm_mom}
\begin{array}{rl}
\displaystyle\sup_{a,h_i}& \left\{\,a: \right.\\
\mbox{s.t.}& \M_d(u_1 \, \y^\#) \, \succeq \, a \, \M_d(v_1\, \y^\#) + \sum_{i=2}^N  \M_d(h_i \, u_1 \, \y^\#)  \,,\\
& \M_d(u_i \, \y^\#) + \M_d(h_i \, v_i\, \y^\#) \, \succeq \, 0 \,, \:1<i\leq N \,, \\
&\left. a\in \R,h_2,\dots,h_N\in\R[\u,\v]_{s}\,
\right\}.
\end{array}
\end{align}
In order to obtain a convergence result 
\jean{as in Theorem \ref{th:cvgN}, we could try to invoke 
specific approximation results} for the reference measure $\U_\# \lambda$. \jean{However and unfortunately}, the only known result is in the polynomial optimization case \cite{laurent2020near}, when $\U = f$.
For every $s\in\N$ define:
\begin{align}
\label{eq:rho_pfm_sos_eps}
\begin{array}{rl}
\rho_{s}^\# (\varepsilon) =\displaystyle\inf_{\sigma_i}& \left\{\,\displaystyle\sum_{i=1}^N \int_\saK u_i(\x)\,\sigma_i(\u,\v) \,\d \U_\# \lambda(\u,\v): \right.\\
\mbox{s.t.}& \bigg| \displaystyle\int_\saK \u^\alpha \v^{\beta}\,v_i\,\sigma_i(\u,\v) \,\d \U_\# \lambda(\u,\v)\,-\,\displaystyle\int_\saK \u^\alpha \v^{\beta}\,v_1\,\sigma_1(\u,\v)\, \d \U_\# \lambda(\u,\v) \bigg| \leq \varepsilon  \,, \\
&\vert\alpha+\beta\vert\,\leq s  \,,  \:1<i\leq N\\
& \bigg| \displaystyle\int_{\saK} v_1 \, \sigma_1(\u,\v)\, \d \U_\# \lambda(\u,\v) \, - \, 1 \bigg| \leq \varepsilon \,, \\
&\left.\sigma_1,\dots,\sigma_N\in\Sigma[\u,\v]_{d}\,
\right\},
\end{array}
\end{align}
with $d \geq d_0(s)$, for a well-chosen $d_0(s)$. 
The convergence of $\rho_{s}^\# (\varepsilon) \to \rho$ as $s\to\infty$ and $\varepsilon \to 0$ is obtained as in Section \ref{sec:gpmN}, by using Corollary \ref{cor:diracSOS} together with the polynomial $q_{i,s}(\u,\v) := (u_i-f_i(\x^\star))^2 +  \sum_{|\alpha+\beta|\leq s} (\u^{\alpha} \v^{\beta} v_i - {\x^{\star}}^{\alpha} g_i(\x^\star))^2 $ (which plays the same role as the polynomial $p_{i,s}$ in the proof of Theorem \ref{th:cvgN}).

An alternative framework to minimize a sum of fractions is presented in Appendix \ref{sec:multirat_hierarchy2}, together with promising  investigation tracks.
\if{
OLD BMI Formulation:
\[
\begin{array}{rl}
\rho_d=\displaystyle\min_{\sigma,\phi_1,\ldots,\phi_N}& \left\{\,\displaystyle\sum_{i=1}^N \int u_i\,\sigma\,d\phi_i: \right.\\
\mbox{s.t.}&\displaystyle\int \u^\alpha\v^\beta\,v_i\,d\phi_i\,=\,\displaystyle\int \u^\alpha\v^\beta\,d\U_\star(\saK),\:i\leq N\\
&\displaystyle\int\sigma\,d\U_\star(\bS^{n-1})\,=\,1\\
&\left.\sigma\in\Sigma[\u,\v]_d\,
\right\}.
\end{array}\]
\begin{theorem}
Assume that $g_i>0$ (or $g_i<0$) for all $\x\in\mathbb{S}^{n-1}$. Then
$\rho_d\downarrow \rho$ as $d\to\infty$.
\end{theorem}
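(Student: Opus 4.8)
The plan is to eliminate the auxiliary measures $\phi_1,\dots,\phi_N$ using their moment constraints, which reduces the bilinear program to a single step of the upper bounds hierarchy of \cite{lasserre2011new} for a \emph{continuous} objective over a compact set; the announced monotone convergence then follows from (the continuous-function version of) Theorem~\ref{th:cvg_pol}. First I would reduce to $g_i>0$ on $\saK$ for every $i$ — replacing $(f_i,g_i)$ by $(-f_i,-g_i)$ when needed — and record that $\U(\saK)\subset\R^{2N}$ is compact, being a continuous image of $\saK=\bS^{n-1}$; as usual I take the $\phi_i$ to be supported on $\U(\saK)$.

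For the elimination step, note that a positive measure on the compact set $\U(\saK)$ is determined by its moments (polynomials are dense in $C(\U(\saK))$), and that $v_i$ is bounded below by a positive constant there because $g_i>0$. Hence the constraints $\int\u^\alpha\v^\beta v_i\,\d\phi_i=\int\u^\alpha\v^\beta\,\d\U_\#\lambda$, imposed for all $(\alpha,\beta)$, force $v_i\,\d\phi_i=\d\U_\#\lambda$, i.e.\ $\d\phi_i=v_i^{-1}\,\d\U_\#\lambda$. Substituting and writing $h(\u,\v):=\sum_{i=1}^N u_i/v_i$ — continuous on $\U(\saK)$, with $h\circ\U=\sum_i f_i/g_i$ — the program becomes $\rho_d=\min\{\,\int h\,\sigma\,\d\U_\#\lambda:\int\sigma\,\d\U_\#\lambda=1,\ \sigma\in\Sigma[\u,\v]_d\,\}$, which is precisely the degree-$d$ upper bound of \cite{lasserre2011new,lasserre2011bounding} for $h$ on $\U(\saK)$ relative to the full-support reference measure $\U_\#\lambda$.

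From here the three assertions are routine. Monotonicity $\rho_{d+1}\le\rho_d$ is immediate from $\Sigma[\u,\v]_d\subseteq\Sigma[\u,\v]_{d+1}$. For any feasible $\sigma\ge0$ the measure $\sigma\,\d\U_\#\lambda$ is a probability measure, so $\rho_d\ge\min_{\U(\saK)}h=\rho$, whence $\rho_d\downarrow\rho^\star\ge\rho$. For the reverse inequality I would take a minimiser $\x^\star$ of $\sum_i f_i/g_i$ on $\saK$, set $\y^\star:=\U(\x^\star)$, and apply the SOS Dirac-density approximation (Theorem~\ref{th:diracSOS} if $\U(\saK)$ happens to be a box, otherwise Corollary~\ref{cor:diracSOS}, both with reference measure $\U_\#\lambda$) to the nonnegative polynomial $q(\u,\v):=\sum_i((u_i-f_i(\x^\star))^2+(v_i-g_i(\x^\star))^2)$, whose unique zero on $\U(\saK)$ is $\y^\star$. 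This yields SOS polynomials $\sigma^{(d)}$ of degree $2d$ with $\int\sigma^{(d)}\,\d\U_\#\lambda=1$ and $\int q\,\sigma^{(d)}\,\d\U_\#\lambda\to0$; hence $\sigma^{(d)}\,\d\U_\#\lambda\rightharpoonup\delta_{\y^\star}$ weakly and, $h$ being continuous on a compact set, $\rho_d\le\int h\,\sigma^{(d)}\,\d\U_\#\lambda\to h(\y^\star)=\rho$, so $\rho^\star\le\rho$. (Once the reduction of the previous paragraph is in hand, one may alternatively just quote Theorem~\ref{th:cvg_pol}.)

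The hard part is exactly the elimination step: it is legitimate only because the moment constraints on $\phi_i$ are imposed at \emph{all} orders, as written. If one instead keeps only the truncated constraints $|\alpha+\beta|\le s$ (as in~\eqref{eq:rho_pfm_sos}), then $\phi_i$ is genuinely free, the program is truly bilinear, and there is no single-step reduction; one must then run a double limit $s\to\infty$, $d=d_0(s)\to\infty$ along a well-chosen rate, exactly as in the proof of Theorem~\ref{th:cvgN}, replacing $q$ by the $s$-dependent polynomials $q_{i,s}$ and controlling the constraint violations through Corollary~\ref{cor:diracSOS}. A secondary point worth checking is that after substitution the objective carries the factors $1/v_i$, so one genuinely needs $g_i$ of constant sign on $\saK$ to keep $\U_\#\lambda$ supported away from $\{v_i=0\}$ — which is precisely the standing hypothesis $g_i>0$ (or $g_i<0$) on $\bS^{n-1}$.
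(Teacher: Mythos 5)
Your proof is correct and follows essentially the same route as the paper's own (much terser) argument: use the all-order moment constraints together with moment-determinacy on the compact set $\U(\saK)$ to force $v_i\,\d\phi_i=\d\U_\#\lambda$, substitute to reduce the program to the degree-$d$ upper bound of \cite{lasserre2011new} for the continuous function $\sum_i u_i/v_i$ with reference measure $\U_\#\lambda$, and conclude monotone convergence. The details you supply (positivity of $v_i$ on the image, the Dirac-density approximation handling the fact that the objective is continuous rather than polynomial) are exactly the points the paper leaves implicit.
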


It suffices to observe that
$$\rho=\displaystyle\inf_{\sigma\in\Sigma[\u,\v]}\left\{\int \sum_{i=1}^N\frac{u_i}{v_i}\sigma\,d\U_\star(\bS^{n-1}):\:\int\sigma\,d\U_\star(\bS^{n-1})=1\right\},$$
and if
\[\int\u^\alpha\,\v^\beta\,v_i\,d\phi_i\,=\,\int\u^\alpha\,\v^\beta\,d\U_\star(\bS^{n-1}),\quad \forall (\alpha,\beta),\]
with $i=1,\ldots,N,$ then $v_i\,d\phi_i=d\U_\star(\bS^{n-1})$ for all $i=1,\ldots N$, and therefore:
\begin{eqnarray*}
\displaystyle\sum_{i=1}^n\,\displaystyle\int u_i\,\sigma\,d\phi_i&=&
\displaystyle\sum_{i=1}^n\int\frac{u_i}{v_i}\,\sigma\,v_i\,d\phi_i\\
&=&\displaystyle\sum_{i=1}^n\displaystyle\int \frac{u_i}{v_i}\,\sigma\,d\U_\star(\bS^{n-1})
\end{eqnarray*}
Such an algorithm works well if $N$ is relatively small, and if one knows how to compute or approximate well
\[\displaystyle\int_{\mathbb{S}^{n-1}}\prod_{i=1}^N f_i(\x)^{\alpha_i}g_i(\x)^{\beta_i}\,d\mu(\x)\,=:\,\int \u^\alpha\,\v^\beta\,d\U_\star(\bS^{n-1}) \,,\]
where $\mu$ is the rotation invariant measure on $\mathbb{S}^{n-1}$ (at least for 
all $\vert\alpha+\beta\vert\leq 2d$). 
One can show that this latter optimization problem reduces again to a generalized eigenvalue problem.
}\fi
\section{Numerical experiments}
\label{sec:benchs}
Here, we illustrate our theoretical framework for the minimization of rational functions on the hypercube $[-1,1]^n$ with \victor{a set of preliminary numerical experiments.
Our experiments are performed with Julia, and we rely on JuMP \cite{jump} and Mosek~\cite{mosek} to model and solve SDP problems, respectively. 
All results were obtained on an Intel Xeon(R) E-2176M CPU (2.70GHz $\times$ 12) with 32Gb of RAM.
Our code is available online\footnote{\url{http://homepages.laas.fr/vmagron/files/rational_pfm.zip}}.}

\jean{We emphasize that the main goal of these experiments is to illustrate that
the pushforward approach yields better results than the usual (multivariate) second Lasserre's hierarchy of upper bounds. 
However most of the computational burden is spent in solving several problems of the form \eqref{eq:integral}
by doing a naive and costly expansion of the integrand in the monomial basis. Therefore in its present form 
this approach is not competitive with standard local optimization algorithms starting from various initial points.
But in our opinion, these numerical experiments provide an incentive to further study
efficient algorithms for computing integrals of the form \eqref{eq:integral} even for a restricted class of polynomials $f$ and sets $\saK$. 
In doing so one could solve higher steps of the hierarchy to get better approximations of the minimum, and address larger size problems as well. For instance if $\saK$ is a simplex then efficient methods described in \cite{baldoni} can be exploited.}

\subsection{Single rational functions}
Here, we focus on the case $N=1$ by considering instances of the minimization problem \eqref{eq:minfg}.
We compare the values of $\rho_d$ and $\rho_d^\#$ when solving SDP \eqref{eq:rho_sos1} and \eqref{eq:rho_pfm_sos1}, respectively, for increasing values of $d \in \N$, as well as the timings needed to obtain them.
Each reported timing is in seconds and includes both the time required to compute  the entries of the SDP matrices (i.e., the time spent to compute the integrals of the monomials on $\saK$) and the solving time of the resulting SDP problem.\\
First, we consider the fraction $\frac{f}{g}$ from Example \ref{ex1}. 
\begin{example}
Let us take
\label{ex1}
\begin{align*}
f(\x) = \sum_{i=1}^n x_i^{2 n} \,, \quad g(\x) = \prod_{i=1}^n x_i^2 \,, \quad  \saK= [-1,1]^n \,.
\end{align*}
Note that $\rho = \min_{\x \in \saK} \frac{f(\x)}{g(\x)} = n$ as a consequence of the inequality of arithmetic and geometric means.
\end{example}

\begin{table}[!ht]
\caption{Upper bounds obtained for the minimum of the function of Example \ref{ex1}.}
\label{table:ex1}
\begin{center}
\begin{tabular}{|c|c|cc|cc|}
\hline
\multirow{2}*{$n$}  & \multirow{2}*{$d$}   & \multicolumn{2}{c|}{$\rho_d$} &  \multicolumn{2}{c|}{$\rho_d^\#$}   \\
 &    &     value & time & value & time\\
 \hline
\multirow{8}*{2} & 1  & 3.15 & 0.01 & 2.16 & 0.01 \\ 
 & 2  & 2.37 & 0.01 & 2.04 & 0.02 \\ 
 & 3  & 2.21 & 0.01 & 2.02 & 0.02 \\ 
 & 4  & 2.11 & 0.01 & 2.01 & 0.02 \\ 
 & 5  & 2.07 & 0.02 & 2.01 & 0.04 \\ 
 & 6  & 2.05 & 0.12 & 2.01 & 0.08 \\ 
 & 7  & 2.03 & 0.22 & 2.01 & 0.18 \\ 
 & 8  & 2.02 & 0.42 & 2.01 & 0.35 \\ 
\hline
\end{tabular}
\begin{tabular}{|c|c|cc|cc|}
\hline
\multirow{2}*{$n$}  & \multirow{2}*{$d$}   & \multicolumn{2}{c|}{$\rho_d$} &  \multicolumn{2}{c|}{$\rho_d^\#$}   \\
 &    &     value & time & value & time\\
 \hline
\multirow{8}*{3} & 1  & 9.29 & 0.01 & 3.66 & 0.01 \\ 
 & 2  & 5.45 & 0.01 & 3.19 & 0.05 \\ 
 & 3  & 4.63 & 0.02 & 3.08 & 0.06 \\ 
 & 4  & 3.85 & 0.09 & 3.05 & 0.07 \\ 
 & 5  & 3.60 & 0.62 & 3.02 & 0.10 \\ 
 & 6  & 3.36 & 4.96 & 3.02 & 0.14 \\ 
 & 7  & 3.27 & 23.1 & 3.01 & 0.21 \\ 
 & 8  & 3.19 & 156. & 3.01 & 0.48 \\ 
\hline
\end{tabular}
\\
\begin{tabular}{|c|c|cc|cc|}
\hline
\multirow{2}*{$n$}  & \multirow{2}*{$d$}   & \multicolumn{2}{c|}{$\rho_d$} &  \multicolumn{2}{c|}{$\rho_d^\#$}   \\
 &    &     value & time & value & time\\
 \hline
\multirow{8}*{4} & 1  & 27.3 & 0.01 & 5.75 & 0.02 \\ 
 & 2  & 13.1 & 0.01 & 4.51 & 0.05 \\ 
 & 3  & 10.8 & 0.09 & 4.22 & 0.06 \\ 
 & 4  & 7.36 & 1.41 & 4.13 & 0.09 \\ 
 & 5  & 6.58 & 20.2 & 4.06 & 0.14 \\ 
 & 6  & 5.52 & 820. & 4.05 & 0.21 \\ 
 & 7  & $-$ & $-$ & 4.04 & 0.25 \\ 
 & 8  & $-$ & $-$ & 4.03 & 0.56 \\ 
\hline
\end{tabular}
\begin{tabular}{|c|c|cc|cc|}
\hline
\multirow{2}*{$n$}  & \multirow{2}*{$d$}   & \multicolumn{2}{c|}{$\rho_d$} &  \multicolumn{2}{c|}{$\rho_d^\#$}   \\
 &    &     value & time & value & time\\
 \hline
\multirow{8}*{5} & 1  & 80.3 & 0.02 & 8.72 & 0.03 \\ 
 & 2  & 32.0 & 0.05 & 6.06 & 0.05 \\ 
 & 3  & 25.4 & 0.42 & 5.46 & 0.07 \\ 
 & 4  & 15.0 & 24.1 & 5.32 & 0.11 \\ 
 & 5  & 12.9 & 1538 & 5.14 & 0.16 \\ 
 & 6  & $-$ & $-$ & 5.10 & 0.35 \\ 
 & 7  & $-$ & $-$ & 5.09 & 0.91 \\ 
 & 8  & $-$ & $-$ & 5.06 & 2.11 \\ 
\hline
\end{tabular}
\\
\begin{tabular}{|c|c|cc|cc|}
\hline
\multirow{2}*{$n$}  & \multirow{2}*{$d$}   & \multicolumn{2}{c|}{$\rho_d$} &  \multicolumn{2}{c|}{$\rho_d^\#$}   \\
 &    &     value & time & value & time\\
 \hline
\multirow{8}*{6} & 1  & 237. & 0.01 & 13.0 & 0.03 \\ 
 & 2  & 80.8 & 0.19 & 7.92 & 0.06 \\ 
 & 3  & 61.8 & 2.61 & 6.90 & 0.10 \\ 
 & 4  & 32.3 & 469. & 6.59 & 0.16 \\ 
 & 5  & $-$ & $-$ & 6.26 & 0.38 \\ 
 & 6  & $-$ & $-$ & 6.19 & 1.18 \\ 
 & 7  & $-$ & $-$ & 6.18 & 4.40 \\ 
 & 8  & $-$ & $-$ & 6.13 & 13.3 \\ 
\hline
\end{tabular}
\begin{tabular}{|c|c|cc|cc|}
\hline
\multirow{2}*{$n$}  & \multirow{2}*{$d$}   & \multicolumn{2}{c|}{$\rho_d$} &  \multicolumn{2}{c|}{$\rho_d^\#$}   \\
 &    &     value & time & value & time\\
 \hline
\multirow{8}*{7} 
 & 1  & 701. & 0.01 & 19.1 & 0.04 \\ 
 & 2  & 209. & 1.24 & 10.2 & 0.06 \\ 
 & 3  & 155. & 21.3 & 8.62 & 0.10 \\ 
 & 4  & $-$ & $-$ & 8.10 & 0.30 \\ 
 & 5  & $-$ & $-$ & 7.43 & 1.17 \\ 
 & 6  & $-$ & $-$ & 7.35 & 4.74 \\ 
 & 7  & $-$ & $-$ & 7.30 & 21.3 \\ 
 & 8  & $-$ & $-$ & 7.26 & 76.0 \\ 
\hline
\end{tabular}
\end{center}
\end{table}

The symbol ``$-$'' indicates that the procedure runs out of memory, which happens during the computation of moments.
The numerical results reported in Table \ref{table:ex1} show that the approach relying on the pushforward measure provides more accurate upper bounds while being much more efficient. 
For this example, the time spent to compute the entries of the two localizing matrices is relatively small (less than 1 \%) compared to the time spent to solve the SDP problem.
Computing $\rho_d$ requires to solve an SDP involving two matrices of size $\binom{n+d}{n}$, while computing $\rho_d^\#$ requires to solve an SDP involving two matrices of size $\binom{2+d}{2}$. 
This explains the efficiency of the method based on the pushforward measure.
Note also that the relative error between $\rho_d^\#$ and $\rho$ increases at fixed $d$ when $n$ increases.
For $n \leq 4$, the obtained results are quite accurate as the relative error  remains below $1 \%$.
For higher values of $n$, the relative error lies between $1.2 \%$ and $3.7 \%$. 

Next, we consider the fraction $\frac{f}{g}$ from Example \ref{ex2}. 
\begin{example}
Let us take
\label{ex2}
\begin{align*}
\hat{f}(\x) = \x^\top \A \x \,, \quad g(\x) = \x^\top \B \x  \,, \quad  \saK= [-1,1]^n \,,
\end{align*}
where $\A$ and $\B$ are matrices with coefficients between $-1$ and $1$, randomly chosen with respect to the uniform distribution and such that $\B$ is positive definite.
Then, we take the minimal evaluation $\hat{\rho}$ of $\frac{\hat f}{g}$ among $10^7$ random points distributed on the cube, and define $f = \hat f - \hat \rho g$ so that  $\frac{f}{g} = \frac{\hat{f}}{g} -  \hat{\rho}$ and $\rho \simeq 0$.
\end{example}

\begin{table}[!ht]
\caption{Upper bounds obtained for the minimum of the function of Example \ref{ex2}.}
\label{table:ex2}
\begin{center}
\begin{tabular}{|p{\sti}|p{\sti}|cc|cc|}
\hline
\multirow{2}*{$n$}  & \multirow{2}*{$d$}   & \multicolumn{2}{c|}{$\rho_d$} &  \multicolumn{2}{c|}{$\rho_d^\#$}   \\
 &    &     value & time & value & time\\
 \hline
\multirow{5}*{2} 
 & 2  & 0.16 & 0.01 & 0.12 & 0.01 \\ 
 & 4  & 0.08 & 0.01 & 0.06 & 0.04 \\ 
 & 6  & 0.05 & 0.04 & 0.02 & 0.18 \\ 
 & 8  & 0.03 & 0.30 & 0.01 & 0.61 \\ 
 & 10  & 0.02 & 1.68 & 0.01 & 2.38 \\ 
\hline
\end{tabular}
\begin{tabular}{|p{\sti}|p{\sti}|cc|cc|}
\hline
\multirow{2}*{$n$}  & \multirow{2}*{$d$}   & \multicolumn{2}{c|}{$\rho_d$} &  \multicolumn{2}{c|}{$\rho_d^\#$}   \\
 &    &     value & time & value & time\\
 \hline
\multirow{5}*{4} 
 & 2  & 0.19 & 0.01 & 0.15 & 0.13 \\ 
 & 3  & 0.14 & 0.09 & 0.12 & 0.19 \\ 
 & 4  & 0.11 & 1.64 & 0.09 & 0.36 \\ 
 & 5  & 0.09 & 26.8 & 0.08 & 0.84 \\ 
 & 6  & 0.07 & 510. & 0.03 & 2.58 \\ 
\hline
\end{tabular}
\\
\begin{tabular}{|p{\sti}|p{\sti}|cc|cc|}
\hline
\multirow{2}*{$n$}  & \multirow{2}*{$d$}   & \multicolumn{2}{c|}{$\rho_d$} &  \multicolumn{2}{c|}{$\rho_d^\#$}   \\
 &    &     value & time & value & time\\
 \hline
\multirow{3}*{6} 
 & 2  & 0.44 & 0.08 & 0.39 & 0.34 \\ 
 & 3  & 0.33 & 2.97 & 0.26 & 1.88 \\ 
 & 4  & 0.26 & 359. & 0.15 & 13.5 \\ 
\hline
\end{tabular}
\begin{tabular}{|p{\sti}|p{\sti}|cc|cc|}
\hline
\multirow{2}*{$n$}  & \multirow{2}*{$d$}   & \multicolumn{2}{c|}{$\rho_d$} &  \multicolumn{2}{c|}{$\rho_d^\#$}   \\
 &    &     value & time & value & time\\
 \hline
\multirow{3}*{8} 
 & 1  & 1.18 & 0.01 & 1.10 & 0.5 \\ 
 & 2  & 0.93 & 0.27 & 0.85 & 1.44 \\ 
 & 3  & 0.74 & 78.3 & 0.62 & 30.3 \\ 
\hline
\end{tabular}
\\
\begin{tabular}{|p{\sti}|p{\sti}|cc|cc|}
\hline
\multirow{2}*{$n$}  & \multirow{2}*{$d$}   & \multicolumn{2}{c|}{$\rho_d$} &  \multicolumn{2}{c|}{$\rho_d^\#$}   \\
 &    &     value & time & value & time\\
 \hline
\multirow{2}*{10} 
 & 1  & 0.99 & 0.39 & 0.92 & 0.47 \\ 
 & 2  & 0.78 & 1.20 & 0.14 & 1.60 \\ 
\hline
\end{tabular}
\begin{tabular}{|p{\sti}|p{\sti}|cc|cc|}
\hline
\multirow{2}*{$n$}  & \multirow{2}*{$d$}   & \multicolumn{2}{c|}{$\rho_d$} &  \multicolumn{2}{c|}{$\rho_d^\#$}   \\
 &    &     value & time & value & time\\
 \hline
\multirow{2}*{12} 
 & 1  & 1.23 & 2.40 & 1.15 & 1.35 \\ 
 & 2  & 1.02 & 4.65 & 0.99 & 22.5 \\ 
\hline
\end{tabular}
\end{center}
\end{table}

The results from Table \ref{table:ex2} show that $\rho_d^\star < \rho_d$, thus the method based on the pushforward measure provides more accurate upper bounds, as previously noticed for Example \ref{ex1}.
When $n$ increases, one can also notice that it is harder to approximate the value of $\rho$.
By contrast with Table \ref{table:ex1}, for $n \geq 10$, the time spent to compute the entries of the localizing matrices becomes larger than the time spent to solve the SDP problem. 
In particular, our implementation lacks of efficiency to compute the support of powers of polynomials $f^i g^j$, when $n$ and the degree $i+j$ of the resulting product gets larger (typically for $n=10$ and  $i+j = 4$).
This explains why the method based on the pushforward measure can be less efficient than the other one.
Implementing an efficient polynomial arithmetic in Julia is left for further development.

\subsection{Sums of rational functions}
Next, we consider a sum of rational functions $\sum_{i=1}^N\frac{f_i}{g_i}$. 
Similarly to the case of a single fraction, we compare the values of $\rho_{d,s}$ and $\rho_{d,s}^\#$ when solving SDP \eqref{eq:rho_sos} and \eqref{eq:rho_pfm_sos}, respectively.
To ease the presentation of the results, we choose $s=d$. 
Despite the potential feasibility issues mentioned in Remark \ref{rk:feas}, the SDP solver always returns a pair of primal-dual optimal solutions.
Table \ref{table:ex3} reports the bounds obtained for the randomly generated functions of Example \ref{ex3} with $N \in \{2,5\}$ and $n \in \{2,5,8\}$.
\begin{example}
\label{ex3}
As in Example \ref{ex2}, we consider for all $i=1,\dots,N$:
\begin{align*}
\hat{f}_i(\x) = \x^\top \A_i \x \,, \quad g_i(\x) = 1+ \x^\top \B_i \x  \,, \quad  \saK= [-1,1]^n  \,,
\end{align*}
where $\A_i$ and $\B_i$ are matrices with coefficients between $-1$ and $1$, randomly chosen with respect to the uniform distribution and such that $\B_i$ is positive definite.
Then, we take the minimal evaluation $\hat{\rho}$ of $ \sum_{i=1}^N\frac{\hat f_i}{g_i}$ among $10^7$ random points distributed on the cube, and consider $f_1 = \hat f_1 - \hat \rho g_1$, $f_2 = \hat f_2, \dots, f_N = \hat f_N$, so that  $\sum_{i=1}^N \frac{f_i}{g_i} = \sum_{i=1}^N \frac{\hat{f_i}}{g_i} -  \hat{\rho}$ and $\rho \simeq 0$.
\end{example}

\begin{table}[!ht]
\caption{Upper bounds obtained for the minimum of the function of Example \ref{ex3}.}
\label{table:ex3}
\begin{center}
\begin{tabular}{|c|c|c|cc|cc|}
\hline
\multirow{2}*{$N$} & \multirow{2}*{$n$}  & \multirow{2}*{$d$}   & \multicolumn{2}{c|}{$\rho_{d,d}$} &  \multicolumn{2}{c|}{$\rho_{d,d}^\#$}   \\
& &    &     value & time & value & time\\
 \hline
\multirow{3}*{2} & \multirow{3}*{2} 
   & 3  & 0.29 & 0.02 & 0.14 & 0.30 \\ 
 & & 4  & 0.21 & 0.04 & 0.10 & 5.41 \\ 
 & & 5  & 0.18 & 0.13 & 0.06 & 115. \\ 
\hline
\end{tabular}
\begin{tabular}{|c|c|c|cc|cc|}
\hline
\multirow{2}*{$N$} & \multirow{2}*{$n$}  & \multirow{2}*{$d$}   & \multicolumn{2}{c|}{$\rho_{d,d}$} &  \multicolumn{2}{c|}{$\rho_{d,d}^\#$}   \\
& &    &     value & time & value & time\\
 \hline
\multirow{3}*{2} & \multirow{3}*{5} 
   & 1  & 1.85 & 0.05 & 1.73 & 0.59 \\ 
 & & 2  & 1.34 & 0.18 & 0.94 & 2.70 \\ 
 & & 3  & 0.95 & 5.61 & 0.32 & 69.4 \\ 
\hline
\end{tabular}
 \\
\begin{tabular}{|c|c|c|cc|cc|}
\hline
\multirow{2}*{$N$} & \multirow{2}*{$n$}  & \multirow{2}*{$d$}   & \multicolumn{2}{c|}{$\rho_{d,d}$} &  \multicolumn{2}{c|}{$\rho_{d,d}^\#$}   \\
& &    &     value & time & value & time\\
 \hline
\multirow{2}*{2} & \multirow{2}*{8} 
   & 1  & 1.42 & 0.53 & 1.29 & 1.58 \\ 
 & & 2  & 1.19 & 6.52 & 1.09 & 225. \\ 
\hline
\end{tabular}
\begin{tabular}{|c|c|c|cc|cc|}
\hline
\multirow{2}*{$N$} & \multirow{2}*{$n$}  & \multirow{2}*{$d$}   & \multicolumn{2}{c|}{$\rho_{d,d}$} &  \multicolumn{2}{c|}{$\rho_{d,d}^\#$}   \\
& &    &     value & time & value & time\\
 \hline
\multirow{2}*{5} & \multirow{2}*{2} 
   & 1  & 1.64  & 0.01 & 1.36 & 0.47 \\ 
 & & 2  & 1.35  & 0.02 & 1.08 & 13.9 \\ 
\hline
\end{tabular}
\\
\begin{tabular}{|c|c|c|cc|cc|}
\hline
\multirow{2}*{$N$} & \multirow{2}*{$n$}  & \multirow{2}*{$d$}   & \multicolumn{2}{c|}{$\rho_{d,d}$} &  \multicolumn{2}{c|}{$\rho_{d,d}^\#$}   \\
& &    &     value & time & value & time\\
 \hline
\multirow{2}*{5} & \multirow{2}*{5} 
   & 1  & 1.08 & 0.15 & 1.03 & 1.50 \\ 
 & & 2  & 1.01 & 0.55 & 0.89 & 239. \\ 
\hline
\end{tabular}
\begin{tabular}{|c|c|c|cc|cc|}
\hline
\multirow{2}*{$N$} & \multirow{2}*{$n$}  & \multirow{2}*{$d$}   & \multicolumn{2}{c|}{$\rho_{d,d}$} &  \multicolumn{2}{c|}{$\rho_{d,d}^\#$}   \\
& &    &     value & time & value & time\\
 \hline
\multirow{2}*{5} & \multirow{2}*{8} 
   & 1  & 1.71 & 0.57 & 1.81 & 17.9 \\ 
 & & 2  & 1.99 & 18.6 & 0.85 & 590. \\ 
\hline
\end{tabular}
\end{center}
\end{table}

Here again, the method based on the pushforward measure suffers from to the lack of efficiency of our implementation to compute the support of powers of polynomials $\prod_{i=1}^N f_i^{\alpha_i} g_i^{\beta_i}$ when the resulting degree gets large.
As for the single fraction case, we still obtain more accurate bounds. 
\section{Conclusion}
\label{sec:conclusion}
\jean{We have described an algorithmic framework for approximating as closely as desired the global minimum of rational fractions over a compact set $\saK$. It consists of a converging hierarchy or approximations indexed by a ``degree" $d$. 
It is based on an equivalent but simpler minimization problem 
in smaller dimension obtained by using the pushforward measure of a reference measure on $\saK$, by some polynomial mapping related to the fractions to be minimized. In case of a single fraction, we obtain a converging hierarchy of upper bounds. For each degree $d$ the 
resulting problem is a generalized eigenvalue problem whose size increases with $d$.
Our numerical preliminary results indicate that this approach provides better approximations
in less computational time.}

\jean{The bottleneck of the method is computing integrals of the form \eqref{eq:integral} to fill up entries of the two matrices involved in the generalized eigenvalue problems that one needs to solve at each step of the hierarchy. Therefore
this approach is currently limited to problems of modest size, with small degree, number of variables and fractions. It is worth mentioning
that if $\saK$ is a simplex then various efficient methods described in \cite{baldoni} can be exploited. One may even expect further progress by 
restricting to certain classes of polynomials and simple sets $\saK$.}

\jean{Another potential numerical issue is the sensitivity of the generalized eigenvalue problem
to solve at each step, with respect to the magnitude of the entries, especially if the matrices are expressed in the usual monomial basis. 
One possible remedy would be to (i)
use a different basis (e.g. basis of tensorized Chebyshev polynomials in the line of research developed in \cite{trefethen2000spectral}) and/or (ii)  rely on recently developed hybrid numeric-symbolic algorithms \cite{lairez2019computing} that yield efficient and certified approximation of integrals.}

\jean{Finally, another interesting issue is to 
provide some convergence rate of the upper bounds hierarchy obtained for the case of single fractions.}

\section*{Acknowledgements}
The work of the first and second authors is supported by the AI Interdisciplinary Institute ANITI funding, through the French ``Investing for the Future PIA3'' program under the Grant agreement n$^{\circ}$ANR-19-PI3A-0004.
The second author was supported by the Tremplin ERC Stg Grant ANR-18-ERC2-0004-01 (T-COPS project), the FMJH Program PGMO (EPICS project) and  EDF, Thales, Orange et Criteo. The research of the third author is conducted in the framework of the regional programme ”Atlanstic 2020, Research, Education and Innovation in Pays de la Loire”, supported
by the French Region Pays de la Loire and the European Regional Development Fund.
This work has benefited from  the European Union's Horizon 2020 research and innovation programme under the Marie Sklodowska-Curie Actions, grant agreement 813211 (POEMA).\\

\appendix

\section{Alternative mappings for a sum of rational functions}
\label{sec:multirat_hierarchy2}

As an alternative to \eqref{eq:mapping1}, consider the mapping 
\begin{equation}
\begin{split}
\mathbf{U}: \saK &\rightarrow \mathbb{R}^N\\
\mathbf{x} &\mapsto \begin{bmatrix}
\frac{f_1(\mathbf{x})}{g_1(\mathbf{x})}\\
\cdots \\
\frac{f_N(\mathbf{x})}{g_N(\mathbf{x})} 
\end{bmatrix}\in\mathbb{R}^N,
\end{split}
\end{equation}
and the uniform measure $\lambda \in \mathcal{M}(\saK)$.
As for the case of a single fraction, one sets $\mathbf{z}:=(z_1,\dots,z_N) = 
\left(\frac{f_1(\mathbf{x})}{g_1(\mathbf{x})}, \dots,  \frac{f_N(\mathbf{x})}{g_N(\mathbf{x})} \right)$. 
For all $\alpha \in \mathbb{N}^N$:
\begin{equation}
\label{eq:mom-gamma}
\gamma_{\alpha} := \int_{\mathbb{R}^N} \mathbf{z}^{\alpha} \d \U_\# \lambda (\z) = \int_{\saK} 
\left(\frac{f_1(\mathbf{x})}{g_1(\mathbf{x})}\right)^{\alpha_1}  \cdots  \left(\frac{f_N(\mathbf{x})}{g_N(\mathbf{x})}\right)^{\alpha_N} \d\lambda (\x).  
\end{equation}

By definition of the mapping $\mathbf{U}$, we can rewrite \eqref{quotients} as follows
\begin{equation}
\label{optU}
\rho := \min_{\z \in \U(\saK)} z_1 + \dots + z_N.
\end{equation}
Given any $d\in\mathbb{N}$, let $\gammab := (\gamma_{\alpha})_{\alpha \in \mathbb{N}_{2d}^N}$ and let us consider the following problem
\begin{align}
\label{gen-eig-multi}
\begin{array}{rl}
a_d^\# = \displaystyle\sup_{a \in \R}& \,a: \\
\mbox{s.t.}& \M_{d}((z_1 + \dots + z_N) \gammab) \, \succeq \, a \, \M_d(\gammab) \,.
\end{array}
\end{align}
The next result can be proved as in  Theorem~\ref{th:cvg_rat_pfm}:
\begin{theorem}
\label{thm-convergence-multi}
Consider the hierarchy of semidefinite programs \eqref{gen-eig-multi}, indexed by $d\in\mathbb{N}$. Then:
\begin{itemize}
\item[(i)] SDP \eqref{gen-eig-multi} has an optimal solution $a_d^\# \geq \rho$ for every $d\in\mathbb{N}$.
\item[(ii)] The sequence $(a_d^\#)_{d\in\mathbb{N}}$ is monotone nonincreasing and $a_d^\# \downarrow \rho$ as $d\rightarrow +\infty$. 
\end{itemize}
\end{theorem}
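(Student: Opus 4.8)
The plan is to recognize the semidefinite hierarchy \eqref{gen-eig-multi} as a verbatim instance of the polynomial upper-bounds hierarchy \eqref{eq:pol_mom}, and then to invoke Theorem \ref{th:cvg_pol} after relabeling the data. First I would check that all the ingredients are legitimate. Since each $g_i$ is positive on the compact set $\saK$, it is bounded below there by a strictly positive constant, so the map $\U=(f_1/g_1,\dots,f_N/g_N)$ is continuous on $\saK$; hence $\U(\saK)$ is a compact subset of $\R^N$. The pushforward $\U_\#\lambda$ is then a finite Borel measure supported on $\U(\saK)$, and by the change-of-variables (transfer) formula its moments are precisely the numbers $\gamma_\alpha$ displayed in \eqref{eq:mom-gamma}, which are finite for the same boundedness reason. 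So $\gammab$ is a genuine (truncated) moment sequence of a measure on a compact set.

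Next I would record that, by the substitution $\z=\U(\x)$ underlying \eqref{optU}, the scalar $\rho$ equals the global minimum over the compact set $\U(\saK)$ of the \emph{polynomial} $p(\z):=z_1+\cdots+z_N\in\R[\z]$. Then I apply Theorem \ref{th:cvg_pol} with $\saK\leftarrow\U(\saK)$, $n\leftarrow N$, $\mu\leftarrow\U_\#\lambda$, $\y\leftarrow\gammab$ and $f\leftarrow p$. Under these substitutions SDP \eqref{eq:pol_mom} reads $\sup\{a:\ \M_d(p\,\gammab)\succeq a\,\M_d(\gammab)\}$, which is exactly \eqref{gen-eig-multi} because $\M_d(p\,\gammab)=\M_d((z_1+\cdots+z_N)\gammab)$ by linearity of the localizing operator in its polynomial argument. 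Part (i) of Theorem \ref{th:cvg_pol} then provides an optimal solution $a_d^\#\ge p^\star=\rho$ for every $d$, which is assertion (i); part (iii) gives that $(a_d^\#)_{d\in\N}$ is monotone nonincreasing with $a_d^\#\downarrow\rho$, which is assertion (ii). (No duality-gap claim appears in the statement, so part (ii) of Theorem \ref{th:cvg_pol} is not needed here.)

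The only point that I expect to require care is that the reference measure must genuinely \emph{see} the minimizer: the upper bounds of Theorem \ref{th:cvg_pol} converge down to $\min_{\U(\saK)}p$ only if the support of $\U_\#\lambda$ contains a minimizer of $p$ on $\U(\saK)$. In the setting of interest this is automatic: $\lambda$ is the Lebesgue measure on $\saK$ (e.g.\ $\saK=[-1,1]^n$), so $\mathrm{supp}\,\lambda=\saK$, and for a continuous surjection $\U:\saK\to\U(\saK)$ one has $\mathrm{supp}(\U_\#\lambda)=\U(\saK)$, whence every minimizer of $p$ over $\U(\saK)$ lies in the support. Granting this, the rest is bookkeeping and the theorem follows directly from Theorem \ref{th:cvg_pol}; notably nothing specific to $N>1$ enters, since after the pushforward the problem is an ordinary polynomial minimization in the $N$ variables $\z$.
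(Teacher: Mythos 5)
Your proposal is correct and follows essentially the same route as the paper: the paper's proof is a one-line reduction ("can be proved as in Theorem~\ref{th:cvg_rat_pfm}"), i.e.\ a relabeled application of the earlier upper-bounds convergence theorem to the pushforward measure $\U_\#\lambda$ on the compact image $\U(\saK)$, which is exactly what you do (invoking Theorem~\ref{th:cvg_pol} directly rather than Theorem~\ref{th:cvg_rat} with $g\equiv 1$ is an immaterial difference since the objective $z_1+\cdots+z_N$ is a polynomial). Your added checks on compactness of $\U(\saK)$ and on the support of $\U_\#\lambda$ are sound and slightly more careful than the paper's own treatment.
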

In the sequel, we present three different possible frameworks to approximate the entries of $\gammab$ in \eqref{eq:mom-gamma}. 
\subsection{First framework via an SDP hierarchy}
\label{integral_sphere}
For a fixed $\alpha \in \N^n$, we present a first iterative scheme to approximate the moments
$\gammab=(\gamma_\alpha)$ in \eqref{eq:mom-gamma}.
%
Note that computing $\gamma_{\alpha}$ boils down to solving a particular instance of the generic problem
\[I \,:=\,\int_{\saK}\frac{f(\x)}{g(\x)}\,\d\lambda(\x),\]
%
%
Introduce the measure $\mu$ such that $\int_{\saK} \x^\alpha g(\x)\,\d\mu(\x)= \int_{\saK} \x^\alpha\,\d\lambda(\x)$ for all $\alpha\in\N^n$ so that
\[I \,=\,\int_{\saK} f(\x)\,\d\mu(\x).\]
The moments of $\mu$ can be approximated from the moments $(\lambda_\alpha)$ of $\lambda$ by following the approach in 
\cite[\S 12.1.1]{lasserre2010moments}.
Let $\theta(\x)=1-\Vert\x\Vert^2$, and with $r\in\N$ fixed, solve:
\[a_r:=\inf_\y \,\{\,{\rm trace}({\M}_r(\y))\,:\: \M_r(\y)\succeq0,\,\M_{r-1}(\theta\,\y)=0\,;\: \sum_{\beta} g_\beta \,y_{\beta+\alpha}=\lambda_\alpha,\quad\vert\alpha\vert\leq 2r-d_g\},\]
and let $\y^{r}$  be an optimal solution. Then with $d\in\N$ fixed, arbitrary:
\[\lim_{r\to\infty}\,\sup_{\vert\alpha\vert\leq d} \vert y^{r}_\alpha-\mu_\alpha\,\vert \,=\,0 \,.\]
In doing so, one can obtain arbitrary close approximations of any fixed number of moments of $\mu$, which in turn provides a converging scheme to compute $I$ and in particular $\gamma_{\alpha}$.

\subsection{Second framework via the generating function}
\label{vandermonde}
\jean{For fixed $d \in \N$, we present a second iterative scheme to compute $\gammab= (\gamma_{\alpha} )_{|\alpha|\leq d}$ in \eqref{eq:mom-gamma}.
Let $h_j := \frac{f_j}{g_j}$, for each $j=1,\dots,N$. Define
the function $\phi : [0,1]^{N} \to \C$ by:}
\[ \phi(t_1,\dots,t_N) := \int_{\saK} \exp \left( \sum_{j=1}^N (\exp( 2 \pi i t_j ) h_j(\x)) \right) d\x \,, \]
and let $\t := (t_1,\dots,t_N)$. 
After performing Taylor expansion of the exponential function at order $r \geq d$, one obtains
\[
\phi(\t) = \underbrace{\sum_{\mid \alpha \mid \leq r} \exp( 2 \pi i \t \cdot \alpha ) \binom{\mid \alpha \mid }{\alpha} \gamma_{\alpha}}_                        {\phi_r(\t)} + e_r(\t) \,,
\]
where $e_r$ denotes the corresponding Taylor remainder.
With $s_{r,N} := \binom{N+r}{r}$, we can evaluate either $\phi$ or $\phi_r$ randomly at $s_{r,N}$ points of $[0,1]^N$. 
Denoting by $\phib$ and $\phib_r$ the respective vectors of values, we obtain
\begin{equation}
\label{eq:vandermonde}
\phib_r = \V_r \gammab \,,
\end{equation}
where $\V_r$ is the multivariate matrix with entries $\left(\exp( 2 \pi i \t \cdot \alpha ) \binom{\mid \alpha \mid }{\alpha}\right)_{\mid \alpha \mid \leq r}$.
We obtain an approximation $\gammab^r$ of  $\gammab$ by solving the following linear system of equations, instead of \eqref{eq:vandermonde}:
\begin{equation}
\label{eq:approx_vandermonde}
\phib = \V_r  \gammab^r \,.
\end{equation}
Then with $d\in\N$ fixed, arbitrary, one can show that
\begin{align}
\lim_{r\to\infty}\,\sup_{\vert\alpha\vert\leq d} \vert \gamma_\alpha^r-  \gamma_\alpha\,\vert \,=\,0 \,.
\end{align}


\subsection{Third framework via a link with the Gaussian}
Here we assume that $\saK=\mathbb{S}^{n-1}$ and $f,g$ are positively  homogeneous functions of degree $d_f$ and $d_g$ respectively.
Let $\E_n:=\{\x:\:\Vert\x\Vert\leq 1\}$ (the Euclidean unit ball). 
Then
\begin{eqnarray*}
\int_{\mathbb{S}^{n-1}}\x^\alpha\,\frac{f}{g}\,d\lambda&=&
(n+\vert\alpha\vert +d_f-d_g)\,\int_{\E_n}\,\x^\alpha\,\frac{f}{g}\,d\x\\
&=&\frac{n+\vert\alpha\vert+d_f-d_g}{\Gamma(1+(n+\vert\alpha\vert+d_f-d_g)/2)}\int_{\R^n}\,
\x^\alpha\,\frac{f}{g}\,\exp(-\Vert\x\Vert^2)d\x\end{eqnarray*}
for all $\alpha\in\N^n$.
Therefore one may use any method to approximate integrals with respect to Gaussian measure to obtain integrals of products of rational functions.
In particular observe that if $d_f=d_g$ then
\[\int_{\bS^{n-1}}\left(\frac{f}{g}\right)^j\,d\lambda\,=\,
\frac{n}{\Gamma(1+n/2)}\int_{\R^n}\,\left(\frac{f}{g}\right)^j\,\exp(-\Vert\x\Vert^2)\,d\x.\]
Similarly:
\[\int_{\bS^{n-1}}\left(\frac{f_1}{g_1}\right)^i\left(\frac{f_2}{g_2}\right)^j
\,d\lambda\,=\,\frac{n}{\Gamma(1+n/2)}\int_{\R^n}\,\left(\frac{f_1}{g_1}\right)^i\left(\frac{f_2}{g_2}\right)^j
\exp(-\Vert\x\Vert^2)\,d\x.\]

\if{
\if{
Consider the mapping $\U:\bS^{n-1}\to\R^{2N}$ defined by:
\[\x\mapsto \U(\x)=(f_1(\x),g_1(\x),\ldots f_N(\x),g_N(\x)),\]
and let $d\lambda^\# (u_1,v_1,\ldots,u_{N},v_N)$ (on $\R^{2N}$) be the pushforward of $\lambda$ on $\bS^{n-1}$ by $\U$.
Then
\[=\min_{\x\in\bS^{n-1}} \sum_{i=1}^N\frac{f_i(\x)}{g_i(\x)}\,=\,\min_{(\u,\v)\,\in{\rm supp}(\lambda^\#)}\,\sum_{i=1}^n\frac{u_i}{v_i}.\]
Then for every $d\in\N$ define:
\[\begin{array}{rl}
\rho_d=\displaystyle\min_{\sigma,\phi_1,\ldots,\phi_N}& \left\{\,\displaystyle\sum_{i=1}^N \int u_i\,\sigma\,d\phi_i: \right.\\
\mbox{s.t.}&\displaystyle\int \u^\alpha\v^\beta\,v_i\,d\phi_i\,=\,\displaystyle\int \u^\alpha\v^\beta\,d\lambda^\#,\quad 
\vert\alpha+\beta\vert\,\leq 2d;\:i\leq N\\
&\displaystyle\int\sigma\,d\lambda^\#\,=\,1\\
&\left.\sigma\in\Sigma[\u,\v]_d\,\right\}.
\end{array}\]
\begin{lemma}
Assume that $g_i>0$ (or $g_i<0$) for all $\x\in\mathbb{S}^{n-1}$. Then
$\rho_d\downarrow \rho$ as $d\to\infty$.
\end{lemma}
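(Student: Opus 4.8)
The plan is to recognize \eqref{gen-eig-multi} as a literal instance of the polynomial upper–bounds hierarchy \eqref{eq:pol_mom}, so that Theorem~\ref{th:cvg_pol} applies almost verbatim, exactly as Theorem~\ref{th:cvg_rat_pfm} was obtained from Theorem~\ref{th:cvg_rat}. The key observation is that \eqref{optU} expresses $\rho$ as the minimum of the \emph{polynomial} $p(\z):=z_1+\dots+z_N$ over the compact set $\mathbf{W}:=\U(\saK)\subset\R^N$, with the pushforward $\mu:=\U_\#\lambda$ as reference measure.

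First I would carry out the reduction. Since each $g_i$ is positive on the compact set $\saK$, the map $\U$ is continuous, hence $\mathbf{W}=\U(\saK)$ is compact; it is moreover bounded, say $\mathbf{W}\subseteq[-M,M]^N$ with $M:=\max_{\x\in\saK}\max_{1\le i\le N}|f_i(\x)/g_i(\x)|<\infty$, so after the affine change of variables $\z\mapsto\z/M$ (which rescales the target scalar $a$ by $M$ and leaves the generalized eigenvalue problem otherwise unchanged) we may assume $\mathbf{W}\subseteq[-1,1]^N$. The measure $\mu=\U_\#\lambda$ is a finite Borel measure with support $\mathbf{W}$, and by the transfer formula its moment sequence is exactly $\gammab$, i.e. $\int_{\mathbf{W}}\z^\alpha\,\d\mu(\z)=\gamma_\alpha$ for all $\alpha\in\N^N$. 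Thus \eqref{gen-eig-multi} is precisely \eqref{eq:pol_mom} with the substitutions $\saK\leftarrow\mathbf{W}$, $n\leftarrow N$, $\mu\leftarrow\U_\#\lambda$, $\y\leftarrow\gammab$, $f\leftarrow p$, and Theorem~\ref{th:cvg_pol}(i),(iii) deliver (i) and (ii) of the statement.

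For completeness I would also spell out the self-contained argument, mirroring the proof of Theorem~\ref{th:cvg_rat}. For (i): $p-\rho\ge 0$ on $\mathbf{W}$, hence by Theorem~\ref{th:loc} one has $\M_d(p\,\gammab)\succeq\rho\,\M_d(\gammab)$, so $a=\rho$ is feasible and $a_d^\#\ge\rho$; the feasible set $\{a:\M_d(p\,\gammab)\succeq a\,\M_d(\gammab)\}$ is closed, and its first diagonal PSD inequality reads $L_{\gammab}(p)-a\,\gamma_0\ge 0$ with $\gamma_0=\lambda(\saK)>0$, so it is bounded above and the supremum is attained. For (ii): any $a$ feasible at level $d+1$ remains feasible at level $d$ (principal submatrix of a PSD matrix), so $(a_d^\#)_d$ is nonincreasing; it is bounded below by $\rho$, hence converges to some $\rho^\star\ge\rho$; since $\M_d(\gammab)\succeq 0$ the feasible set at level $d$ is the interval $(-\infty,a_d^\#]$, so $\rho^\star\le a_d^\#$ for every $d$ forces $\M_d((p-\rho^\star)\gammab)\succeq 0$ for all $d$, whence Theorem~\ref{th:loc} gives $p-\rho^\star\ge 0$ on $\mathbf{W}$ and therefore $\rho^\star\le\rho$; thus $\rho^\star=\rho$ and $a_d^\#\downarrow\rho$.

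The only genuinely delicate point is bookkeeping rather than a real obstacle: Theorem~\ref{th:loc} is stated for subsets of $[-1,1]^n$, whereas $\mathbf{W}=\U(\saK)$ need not lie in a unit box, which is why the preliminary rescaling by $M$ is required (alternatively, one checks that the localizing-matrix characterization of nonnegativity is valid on any compact set, the box hypothesis being merely a normalization). Everything else is a direct transcription of Theorems~\ref{th:cvg_pol} and~\ref{th:cvg_rat_pfm}.
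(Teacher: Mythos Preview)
You have proved the wrong statement. Your entire argument treats $\rho_d$ as the quantity $a_d^\#$ of \eqref{gen-eig-multi}, built on the rational map $\U(\x)=(f_1/g_1,\ldots,f_N/g_N)\in\R^N$ and the moments $\gammab$ of \eqref{eq:mom-gamma}. What you wrote is a correct proof of Theorem~\ref{thm-convergence-multi} (indeed this is exactly the ``direct application of Theorem~\ref{th:cvg_rat}/\ref{th:cvg_pol}'' the paper alludes to there), but that is not the lemma. In the lemma, $\rho_d$ is defined just above it via the bilinear program
\[
\rho_d=\min_{\sigma,\phi_1,\ldots,\phi_N}\Bigl\{\sum_{i=1}^N\int u_i\,\sigma\,\d\phi_i:\ \int\u^\alpha\v^\beta v_i\,\d\phi_i=\int\u^\alpha\v^\beta\,\d\U_\#\lambda,\ |\alpha+\beta|\le 2d,\ \int\sigma\,\d\U_\#\lambda=1,\ \sigma\in\Sigma[\u,\v]_d\Bigr\},
\]
where the pushforward is by the \emph{polynomial} map $\U(\x)=(f_1(\x),g_1(\x),\ldots,f_N(\x),g_N(\x))\in\R^{2N}$, and the $\phi_i$ are auxiliary measures. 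This is not a generalized eigenvalue problem and is not an instance of \eqref{eq:pol_mom}; your reduction does not apply.

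The paper's proof hinges on an ingredient you never touch: the moment-matching constraints. As $d\to\infty$ these constraints hold for all $(\alpha,\beta)$, which forces $v_i\,\d\phi_i=\d\U_\#\lambda$ for each $i$ (two finite measures on a compact set with identical moments coincide). Substituting this into the objective gives
\[
\sum_{i=1}^N\int u_i\,\sigma\,\d\phi_i=\sum_{i=1}^N\int\frac{u_i}{v_i}\,\sigma\,v_i\,\d\phi_i=\int\Bigl(\sum_{i=1}^N\frac{u_i}{v_i}\Bigr)\sigma\,\d\U_\#\lambda,
\]
so that in the limit $\rho_d$ collapses to $\inf\{\int(\sum_i u_i/v_i)\sigma\,\d\U_\#\lambda:\int\sigma\,\d\U_\#\lambda=1,\ \sigma\in\Sigma[\u,\v]\}$, which equals $\rho$ by the standard upper-bounds argument (Theorem~\ref{th:cvg_pol} applied to the continuous function $\sum_i u_i/v_i$ on $\U(\saK)$). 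Monotonicity in $d$ comes from the nesting of the feasible sets. Your localizing-matrix/Theorem~\ref{th:loc} machinery is simply not what drives this lemma; the identification $v_i\,\d\phi_i=\d\U_\#\lambda$ is.
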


It suffices to observe that
\[\rho=\displaystyle\inf_{\sigma\in\Sigma[\u,\v]}\,\{\int \sum_{i=1}^N\frac{u_i}{v_i}\sigma\,d\lambda^\#:\:\int\sigma\,d\lambda^\#=1\},\]
and if 
\[\int\u^\alpha\,\v^\beta\,v_i\,d\phi_i\,=\,\int\u^\alpha\,\v^\beta\,d\lambda^\#,\quad \forall (\alpha,\beta),\quad i=1,\ldots,N,\]
then $v_i\,d\phi_i=d\lambda^\#$ for all $i=1,\ldots N$, and therefore:
\begin{eqnarray*}
\displaystyle\sum_{i=1}^n\,\displaystyle\int u_i\,\sigma\,d\phi_i&=&
\displaystyle\sum_{i=1}^n\int\frac{u_i}{v_i}\,\sigma\,v_i\,d\phi_i\\
&=&\displaystyle\sum_{i=1}^n\displaystyle\int \frac{u_i}{v_i}\,\sigma\,d\lambda^\#
\end{eqnarray*}
Of course this works if $N$ is relatively small .... and one knows how to compute (or approximate well)
\[\displaystyle\int_{\mathbb{S}^{n-1}}\prod_{i=1}^N f_i(\x)^{\alpha_i}g_i(\x)^{\beta_i}\,d\mu(\x)\,=:\,\int \u^\alpha\,\v^\beta\,d\lambda^\#\]
where $\mu$ is the rotation invariant measure on $\mathbb{S}^{n-1}$ (at least for 
all $\vert\alpha+\beta\vert\leq 2d$).
}\fi
}\fi


\begin{thebibliography}{10}

\bibitem{mosek}
MOSEK ApS.
\newblock {\em The MOSEK optimization toolbox. Version 8.1.}, 2017.

\bibitem{baldoni}
V.~Baldoni, N.~Berline, J.~de~Loera, M.~K\"{o}ppe, and M.~Vergne.
\newblock How to integrate a polynomial over a simplex.
\newblock {\em Math. Comp.}, 80(273):297--325, 2010.

\bibitem{bugarin2016minimizing}
F.~Bugarin, D.~Henrion, and J.~B. Lasserre.
\newblock Minimizing the sum of many rational functions.
\newblock {\em Mathematical Programming Computation}, 8(1):83--111, 2016.

\bibitem{de2017improved}
E.~De~Klerk, R.~Hess, and M.~Laurent.
\newblock {Improved convergence rates for Lasserre-type hierarchies of upper
  bounds for box-constrained polynomial optimization}.
\newblock {\em SIAM Journal on Optimization}, 27(1):347--367, 2017.

\bibitem{de2019distributionally}
E.~de~Klerk, D.~Kuhn, and K.~Postek.
\newblock Distributionally robust optimization with polynomial densities:
  theory, models and algorithms.
\newblock {\em Mathematical Programming}, pages 1--32, 2019.

\bibitem{de2019survey}
E.~de~Klerk and M.~Laurent.
\newblock A survey of semidefinite programming approaches to the generalized
  problem of moments and their error analysis.
\newblock In {\em World Women in Mathematics 2018}, pages 17--56. Springer,
  2019.

\bibitem{de2020convergence}
E.~de~Klerk and M.~Laurent.
\newblock {Convergence analysis of a Lasserre hierarchy of upper bounds for
  polynomial minimization on the sphere}.
\newblock {\em Mathematical Programming}, pages 1--21, 2020.

\bibitem{deKlerk16}
E.~de~Klerk, M.~Laurent, and Z.~Sun.
\newblock Convergence analysis for {Lasserre's measure-based hierarchy of upper
  bounds for polynomial optimization}.
\newblock {\em Mathematical Programming A}, pages 1--30, 2016.

\bibitem{deKlerkSDP}
E.~de~Klerk and F.~Vallentin.
\newblock {On the Turing Model Complexity of Interior Point Methods for
  Semidefinite Programming}.
\newblock {\em SIAM Journal on Optimization}, 26(3):1944--1961, 2016.

\bibitem{jump}
I.~Dunning, J.~Huchette, and M.~Lubin.
\newblock Ju{MP}: A modeling language for mathematical optimization.
\newblock {\em SIAM Review}, 59(2):295--320, 2017.

\bibitem{Grundmann78}
A.~Grundmann and H.~M. Moller.
\newblock {Invariant Integration Formulas for the n-Simplex by Combinatorial
  Methods}.
\newblock {\em SIAM Journal on Numerical Analysis}, 15(2):282--290, 1978.

\bibitem{henrion2020moment}
D.~Henrion, M.~Korda, and J.~B. Lasserre.
\newblock {\em {The Moment-SOS Hierarchy: Lectures In Probability, Statistics,
  Computational Geometry, Control And Nonlinear PDEs}}, volume~4.
\newblock World Scientific, 2020.

\bibitem{Parsimony16}
{J. B. Lasserre}.
\newblock {Convex Optimization and Parsimony of \$L\_p\$-balls Representation}.
\newblock {\em {SIAM Journal on Optimization}}, 26(1):247--273, 2016.

\bibitem{jibetean2006global}
D.~Jibetean and E.~de~Klerk.
\newblock Global optimization of rational functions: a semidefinite programming
  approach.
\newblock {\em Mathematical Programming}, 106(1):93, 2006.

\bibitem{lairez2019computing}
P.~Lairez, M.~Mezzarobba, and M.~Safey El~Din.
\newblock Computing the volume of compact semi-algebraic sets.
\newblock In {\em Proceedings of the 2019 on International Symposium on
  Symbolic and Algebraic Computation}, pages 259--266, 2019.

\bibitem{lasserre2011new}
J.~B. Lasserre.
\newblock A new look at nonnegativity on closed sets and polynomial
  optimization.
\newblock {\em SIAM Journal on Optimization}, 21(3):864--885, 2011.

\bibitem{Lasserre01moments}
J.B. Lasserre.
\newblock Global optimization with polynomials and the problem of moments.
\newblock {\em SIAM Journal on Optimization}, 11(3):796--817, 2001.

\bibitem{Las06SparseSOS}
J.B. Lasserre.
\newblock {Convergent SDP-Relaxations in Polynomial Optimization with
  Sparsity}.
\newblock {\em SIAM Journal on Optimization}, 17(3):822--843, 2006.

\bibitem{lasserre2010moments}
J.B. Lasserre.
\newblock {\em Moments, positive polynomials and their applications}, volume~1.
\newblock Imperial College Press, London, 2010.

\bibitem{lasserre2011bounding}
J.B. Lasserre.
\newblock Bounding the support of a measure from its marginal moments.
\newblock {\em Proceedings of the American Mathematical Society},
  139(9):3375--3382, 2011.

\bibitem{lasserre2019volume}
J.B. Lasserre.
\newblock Volume of sublevel sets of homogeneous polynomials.
\newblock {\em SIAM Journal on Applied Algebra and Geometry}, 3(2):372--389,
  2019.

\bibitem{Las19}
J.B. Lasserre.
\newblock Connecting optimization with spectral analysis of tri-diagonal
  matrices.
\newblock {\em Mathematical Programming}, pages 1--15, 2020.

\bibitem{laurent2020near}
M.~Laurent and L.~Slot.
\newblock Near-optimal analysis of {L}asserre's univariate measure-based bounds
  for multivariate polynomial optimization.
\newblock {\em Math. Program.}, 2020.
\newblock preprint arXiv:2001.11289.

\bibitem{magron2018interval}
V.~Magron.
\newblock Interval enclosures of upper bounds of roundoff errors using
  semidefinite programming.
\newblock {\em ACM Transactions on Mathematical Software (TOMS)}, 44(4):1--18,
  2018.

\bibitem{magron2019semidefinite}
V.~Magron, P.-L. Garoche, D.~Henrion, and X.~Thirioux.
\newblock Semidefinite approximations of reachable sets for discrete-time
  polynomial systems.
\newblock {\em SIAM Journal on Control and Optimization}, 57(4):2799--2820,
  2019.

\bibitem{magron2015semidefinite}
V.~Magron, D.~Henrion, and J.B. Lasserre.
\newblock Semidefinite approximations of projections and polynomial images of
  semialgebraic sets.
\newblock {\em SIAM Journal on Optimization}, 25(4):2143--2164, 2015.

\bibitem{NN-94}
Y.~Nesterov and A.~Nemirovski.
\newblock {\em {Interior Point Polynomial Methods in Convex Programming: Theory
  and Applications}}.
\newblock {Society for Industrial and Applied Mathematics}, Philadelphia, 1994.

\bibitem{Primolevo2006}
G.~Primolevo, O.~Simeone, and U.~Spagnolini.
\newblock Towards a joint optimization and beamforming for mim downlink.
\newblock {\em IEEE Ninth International Symposium on Spread Spectrum Techniques
  and Applications}, pages 493--497, 2006.

\bibitem{slot2020improved}
L.~Slot and M.~Laurent.
\newblock {Improved convergence analysis of Lasserre’s measure-based upper
  bounds for polynomial minimization on compact sets}.
\newblock {\em Mathematical Programming}, pages 1--41, 2020.

\bibitem{trefethen2000spectral}
Lloyd~N Trefethen.
\newblock {\em Spectral methods in MATLAB}.
\newblock SIAM, 2000.

\bibitem{Vandenberghe94SDP}
L.~Vandenberghe and S.~Boyd.
\newblock {Semidefinite Programming}.
\newblock {\em SIAM Review}, 38:49--95, 1994.

\bibitem{Waki06SparseSOS}
H.~Waki, S.~Kim, M.~Kojima, and M.~Muramatsu.
\newblock {Sums of Squares and Semidefinite Programming Relaxations for
  Polynomial Optimization Problems with Structured Sparsity}.
\newblock {\em SIAM Journal on Optimization}, 17(1):218--242, 2006.

\bibitem{chordaltssos}
J.~Wang, V.~Magron, and J.B. Lasserre.
\newblock {Chordal-TSSOS: a moment-SOS hierarchy that exploits term sparsity
  with chordal extension}.
\newblock {\em SIAM Journal on Optimization}, 2020.
\newblock Accepted for publication.

\bibitem{tssos}
J.~Wang, V.~Magron, and J.B. Lasserre.
\newblock {TSSOS: a moment-SOS hierarchy that exploits term sparsity}.
\newblock {\em SIAM Journal on Optimization}, 2020.
\newblock Accepted for publication.

\bibitem{Wu2009}
M.C. Wu, L.~Zhang, Z.~Wang, D.C. Christiani, and X.~Lin.
\newblock Sparse linear discriminant analysis for simultaneous testing for the
  significance of a gene set/pathway and gene selection.
\newblock {\em Bioinformatics}, 25(9):1145--1151, 2009.

\bibitem{Zahm2020}
Olivier Zahm, Daniele Bigoni, Clémentine Prieur, and Youssef Marzouk.
\newblock Nonlinear dimension reduction for regression using gradient
  information.
\newblock {\em Submitted soon}, 2020.

\end{thebibliography}
\end{document}